\newtheorem{thm}{Theorem}[section]
\newtheorem{prop}[thm]{Proposition}
\newtheorem{lem}[thm]{Lemma}
\newtheorem{cor}[thm]{Corollary}
\newtheorem{rem}[thm]{Remark}
\newcommand{\lorarr}{\longrightarrow}
\newcommand{\Rarr}{\Rightarrow}
\newcommand{\mcF}{\mathcal{F}}
\newcommand{\mcO}{\mathcal{O}}
\newcommand{\mcU}{\mathcal{U}}
\newcommand{\mcX}{\mathcal{X}}
\DeclareMathOperator{\Cl}{Cl}
\DeclareMathOperator{\Syl}{Syl}
\DeclareMathOperator{\res}{res}
\DeclareMathOperator{\Tr}{Tr}
\DeclareMathOperator{\Res}{Res}
\DeclareMathOperator{\Ind}{Ind}
\DeclareMathOperator{\vx}{vx}
\DeclareMathOperator{\Br}{Br}
\DeclareMathOperator{\rank}{rank}
\newcommand{\ol}{\overline}
\newcommand{\vphi}{\varphi}
\title{Lower defect groups and vertices of simple modules}
\author{Akihiko Hida and Masao Kiyota}
\address{Akhiko Hida, Faculty of Education, 
Saitama University,
Shimo-okubo 255, Sakura-ku, Saitama-city, Saitama, 338-8570, Japan}
\email{ahida@mail.saitama-u.ac.jp}
\address{Masao Kiyota,
College of Liberal Arts and Sciences, Tokyo Medical and Dental University, Kohnodai 2-8-30, Ichikawa, Chiba, 272-0827, Japan}
\email{kiyota.las@tmd.ac.jp}
\begin{document}
\maketitle

\begin{abstract}
We compare lower defect groups associated with 
$p$-regular classes 
and vertices of simple modules for a 
block of a finite group algebra. 
We show that lower defect groups are contained in vertices 
of simple modules 
after suitable reordering. 
Moreover, for a block of 
principal type, we show that 
a $p$-regular lower defect group which contains a vertex of simple module 
is a defect group of the block.  
\end{abstract}

\section{Introduction}\label{Introduction}

Let $G$ be a finite group. 
Let $\mcO$ be a complete discrete valuation 
ring of characteristic $0$ such that the 
residue field $k=\mcO/J(\mcO)$ is an algebraically 
closed field of characteristic $p>0$. 
Let $ZkG$ be the center of the group algebra $kG$. 
We call 
a primitive idempotent of $ZkG$ 
a block of $kG$. 
In this paper, we consider the relation between two 
important families of $p$-subgroups associated with a block $b$ 
of $kG$, lower defect groups of $b$ and vertices of simple 
$kGb$-modules. 
  
A conjugacy class of $G$ is called $p$-regular if 
the order of its element is not divisible by $p$. 
The $p$-regular conjugacy classes of $G$ are 
distributed to blocks of $kG$. 
Let $\Cl(G_{p'})$  be the set of all $p$-regular classes 
of $G$ and 
\[\Cl(G_{p'})=\bigcup_b \Cl_{p'}(b)\]
a decomposition of $\Cl(G_{p'})$ into blocks of $kG$. 
Let $\Cl_{p'}(b)=\{C_i\}_{1 \leq i \leq l(b)}$ and  
$Q_i$ a defect group of $C_i$, namely, 
$Q_i$ is a Sylow $p$-subgroup of $C_G(x_i)$ where 
$x_i \in C_i$. 
Then $Q_i$ ($1 \leq i \leq l(b))$ are called lower 
defect groups of $b$ associated with $p$-regular classes.

Let $\{S_i\}_{1 \leq i \leq l(b)}$ be a representatives of 
complete set of isomorphism classes of simple 
$kGb$-modules. It is known that this set and 
$\Cl_{p'}(b)$ have the same cardinality. 
We want to compare $Q_i$ and a vertex $\vx(S_j)$ of $S_j$. 
These are both subgroups of some defect group of $b$. 
For example, if a defect group $P$ of $b$ is abelian, 
then $\vx(S_i)=P$ for all $i$ by the theorem of Kn\"{o}rr \cite{K}. 
Hence in this case, $Q_i \leq_G \vx(S_j)$ for 
all $i,j$. 
Our first result shows that the inclusion 
$Q_i \leq_G \vx(S_i)$ holds in general for all $i$ if we 
renumber the indices suitably. 

\begin{prop}\label{permutation}
Let $b$ be a block of $kG$. 
Let $\{S_i\}_{1 \leq i \leq l(b)}$ be a set of representatives of isomorphism classes of simple $kGb$-modules. 
Let $\{Q_i\}_{1 \leq i \leq l(b)}$ be the lower defect groups of $b$ 
associated with $p$-regular classes. 
Then there exists a permutation $\sigma$ of $\{1,2,\dots,l(b)\}$ such 
that 
\[Q_i \leq_G \vx(S_{\sigma(i)})\]
for all $i$. In particular, 
\[\prod_{i=1}^{l(b)}|Q_i| \leq \prod_{i=1}^{l(b)}
|\vx (S_i)|.\]
\end{prop}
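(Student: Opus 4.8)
The plan is to deduce the permutation $\sigma$ from Hall's marriage theorem, and to reduce the resulting Hall condition to a single counting inequality between $p$-regular classes and simple modules, indexed by families of $p$-subgroups. First I would form the bipartite graph $\Gamma$ whose left vertices $\{1,\dots,l(b)\}$ index the lower defect groups $Q_i$, whose right vertices $\{1,\dots,l(b)\}$ index the simple modules $S_j$, and with an edge $i \sim j$ exactly when $Q_i \leq_G \vx(S_j)$. A permutation $\sigma$ with $Q_i \leq_G \vx(S_{\sigma(i)})$ for all $i$ is precisely a matching of $\Gamma$ saturating the left-hand side; since both sides have cardinality $l(b)$, such a matching is automatically a bijection. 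Once $\sigma$ is produced, the product inequality is immediate: from $|Q_i| \leq |\vx(S_{\sigma(i)})|$ for each $i$ and the fact that $\sigma$ is a bijection we get $\prod_i |Q_i| \leq \prod_i |\vx(S_{\sigma(i)})| = \prod_j |\vx(S_j)|$.

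Next I would translate Hall's condition into poset language. For a subset $I$ of the left vertices, its neighbourhood $N(I)$ consists of those $j$ with $\vx(S_j) \geq_G Q_i$ for some $i \in I$, i.e. those $j$ for which $\vx(S_j)$ lies in the conjugation-closed up-set $\mathcal{U}$ generated by $\{Q_i : i \in I\}$; every $i \in I$ also has $Q_i \in \mathcal{U}$. Thus $|I| \leq \#\{i : Q_i \in \mathcal{U}\}$ while $|N(I)| = \#\{j : \vx(S_j) \in \mathcal{U}\}$, so Hall's condition $|N(I)| \geq |I|$ follows once one proves, for every conjugation-closed up-set $\mathcal{U}$ of $p$-subgroups,
\[ \#\{i : Q_i \in \mathcal{U}\} \leq \#\{j : \vx(S_j) \in \mathcal{U}\}. \]
Complementing $\mathcal{U}$ to a down-set $\mathcal{F}$ (a family of $p$-subgroups closed under conjugacy and under passage to subgroups) and using that both index sets have size $l(b)$, this is equivalent to
\[ \#\{j : \vx(S_j) \in \mathcal{F}\} \leq \#\{i : Q_i \in \mathcal{F}\}, \]
that is: the number of simple $kGb$-modules that are relatively $\mathcal{F}$-projective is at most the number of $p$-regular classes in $b$ whose defect group lies in $\mathcal{F}$. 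I would stress that this has to hold for all families $\mathcal{F}$ at once, not merely for the principal families $\mathcal{F} = \{R' : R' \leq_G R\}$; a small poset example shows that the principal inequalities alone do not force a matching to exist, whereas relative $\mathcal{F}$-projectivity is in any case naturally a condition attached to a family, so the family version is the correct target.

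The heart of the matter, and the step I expect to be the main obstacle, is this final inequality. The right-hand side is essentially the definition of the lower defect data: $\#\{i : Q_i \in \mathcal{F}\} = \sum_{[R] \in \mathcal{F}} m_b^{(1)}(R)$, where $m_b^{(1)}(R)$ is the number of $p$-regular classes with defect group $R$; Brauer's theory of lower defect groups reinterprets this sum through the relative trace maps $\Tr_R^G$ on $Z(kGb)$, equivalently as the rank of the corresponding section of the Cartan matrix of $b$, so that it measures the span of the $p$-regular class sums of defect in $\mathcal{F}$. The left-hand side is controlled by relative projectivity: a simple module is relatively $\mathcal{F}$-projective exactly when its vertex lies in $\mathcal{F}$. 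The plan is to place both counts in a common linear-algebra framework and compare ranks — concretely, to realise the relatively $\mathcal{F}$-projective simple modules as linearly independent data inside the relative trace image above, via the central characters (or the restriction of the decomposition map) evaluated on the class sums of defect in $\mathcal{F}$, thereby injecting the first set into a basis of the second. Making this comparison exact for an \emph{arbitrary} family $\mathcal{F}$, rather than only in the abelian-defect case (where Kn\"{o}rr's theorem \cite{K} already forces $\vx(S_i) = P$) or for principal families, is where the genuine difficulty lies.
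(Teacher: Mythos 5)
Your Hall's-theorem framework is sound as bookkeeping, but it proves essentially nothing: the entire content of the proposition is concentrated in the counting inequality you end with, and you leave that inequality unproven. For every conjugation-closed up-set $\mcU$ of $p$-subgroups, the inequality $\#\{i : Q_i \in \mcU\} \leq \#\{j : \vx(S_j) \in \mcU\}$ is not merely sufficient for the existence of the matching $\sigma$ but equivalent to it (a perfect matching conversely implies Hall's condition), so the reduction does not simplify the problem; and your plan for proving it --- ``place both counts in a common linear-algebra framework,'' ``injecting the first set into a basis of the second'' via central characters or the decomposition map --- is a hope, not an argument: no such injection is constructed, and there is no obvious direct proof of this family-indexed inequality that does not first produce the matching. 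You flag this yourself as ``where the genuine difficulty lies,'' which is accurate: the proposal is a reformulation of the statement, not a proof of it.

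The paper obtains the permutation directly, with no Hall condition to verify. Let $\vphi_i$ be the Brauer character of $S_i$ and $x_j \in C_j$ representatives of the classes in $\Cl_{p'}(b)$, with $Q_j \in \Syl_p(C_G(x_j))$. By \cite[Chapter 5, Theorem 11.6]{NT}, the $l(b) \times l(b)$ matrix $(\vphi_i(x_j))$ has determinant outside $J(\mcO)$; expanding the determinant, some term $\prod_i \vphi_{\sigma(i)}(x_i)$ lies outside $J(\mcO)$, hence each factor satisfies $\vphi_{\sigma(i)}(x_i) \not\in p\mcO$. Green's results (Proposition \ref{Green}) say precisely that $\vphi(x) \not\in p\mcO$ forces $\Res^G_Q S$ to have an indecomposable summand of dimension prime to $p$ for $Q \in \Syl_p(C_G(x))$, and hence $Q \leq_G \vx(S)$. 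In effect the paper works in the finer bipartite graph with edges ``$\vphi_j(x_i) \not\in p\mcO$'' --- a subgraph of yours, by Green --- where a perfect matching is immediate from the nonvanishing determinant. This is also the missing ingredient for your route: the determinant argument yields the matching, which then implies Hall's condition, so your family inequality comes out as a corollary of the proposition rather than as a stepping stone toward it.
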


Next we consider when the equality holds in 
Proposition \ref{permutation}. 
We show that, under some assumption, if the equality holds 
then $Q_i=\vx(S_{\sigma(i)})$ is a defect group of $b$.  
A block $b$ is of principal type if 
the image of Brauer homomorphism with respect $Q$ is a 
block of $kC_G(Q)$ for every $p$-subgroup $Q$ contained in a 
defect group of $b$. 
For a $p$-subgroup $Q$ of $G$, 
we denote by $m_b^1(Q)$ the 
multiplicity of $Q$ as a lower defect group 
of $b$ associated with $p$-regular classes, that is, 
$m_b^1(Q)$ is a number of 
$i$ $(1 \leq i \leq l(b))$ such that $Q_i=_G Q$. 
The following theorem is our main result. 

\begin{thm}\label{principal}
Let $b$ be a block of principal type of $kG$ 
and $P$ a defect group of $b$. 
Let $S$ be a simple $kGb$-module and 
$R$ a vertex of $S$. 
If $R \leq Q <P$, then 
$m_b^1(Q)=0$.  
\end{thm}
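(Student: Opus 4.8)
The plan is to prove the statement in the sharper form that \emph{no $p$-regular class of $b$ has defect group conjugate to $Q$}. Fix a maximal $b$-Brauer pair $(P,e_P)$; by the theorem of Kn\"orr \cite{K} we may take the defect group $P$ so that $C_P(R)=Z(R)$. Since $R \leq Q$, every element of $P$ centralizing $Q$ centralizes $R$, so $C_P(Q) \leq C_P(R) = Z(R) \leq R \leq Q$; as $C_P(Q)$ also centralizes $Q$ this forces $C_P(Q) \leq Z(Q)$, while $Z(Q) \leq C_P(Q)$ always. Hence $C_P(Q)=Z(Q)$, i.e. the unique subpair $(Q,e_Q) \leq (P,e_P)$ is centric. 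The inclusion $R \leq Q$ is used only here.

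The next step is to pass to the local block. Because $b$ is of principal type, $b_Q := \Br_Q(b)$ is a single block of $kC_G(Q)$, equal to the subpair block $e_Q$, and it is $N_G(Q)$-stable. By the standard Brauer-pair formula $C_P(Q)$ is a defect group of $e_Q$, so $b_Q$ has the central defect group $Z(Q)$; a block with central defect group is nilpotent, whence $l(b_Q)=1$ and $b_Q$ contains a unique $p$-regular class of $C_G(Q)$, say that of a $p$-regular element $x_0$. Moreover, if $C$ is any $p$-regular class of $G$ lying in $b$ with defect group $Q$, choose $x \in C \cap C_G(Q)$ (possible since $Q$ is a defect group of $C$); applying the ring homomorphism $\Br_Q$ to $b\hat{C}=\hat{C}$ gives $b_Q\,\Br_Q(\hat{C})=\Br_Q(\hat{C})$, and since $\Br_Q(\hat{C})=\sum_{x' \in C \cap C_G(Q)} x' \neq 0$, the $C_G(Q)$-class of $x$ lies in $b_Q$ and therefore equals that of $x_0$. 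Thus it suffices to show that the $G$-class of $x_0$ does \emph{not} have defect group $Q$, that is, $Q \notin \Syl_p(C_G(x_0))$.

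The crux is to rule out $Q \in \Syl_p(C_G(x_0))$. Assume it holds and put $L=C_G(x_0)$, so $Q \in \Syl_p(L)$; then $Q$ is the normal Sylow $p$-subgroup $O_p(N_L(Q))$ and $N_L(Q)/Q$ is a $p'$-group. Fix any $y \in N_P(Q)$. As $b_Q$ is $N_G(Q)$-stable with a unique $p$-regular class, that class is $N_G(Q)$-invariant, so ${}^{y}x_0$ is $C_G(Q)$-conjugate to $x_0$; choosing $c \in C_G(Q)$ with ${}^{y}x_0={}^{c}x_0$ gives $c^{-1}y \in N_L(Q)$. Write $c^{-1}y = su$ with $p$-part $s$ and $p'$-part $u$; normality of $Q$ in $N_L(Q)$ forces $s \in Q$. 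The conjugation action of $c^{-1}y$ on $Q$ equals that of $y$ (since $c$ centralizes $Q$) and equals $\mathrm{conj}_s\cdot\mathrm{conj}_u$. But $\mathrm{conj}_y$ is a $p$-element of $\Aut(Q)$ and $\mathrm{conj}_s$ is an inner, hence $p$-, automorphism commuting with the $p'$-automorphism $\mathrm{conj}_u$, so $\mathrm{conj}_u=1$; thus $u \in C_G(Q)$ and $y \in s\,C_G(Q) \subseteq Q\,C_G(Q)$. Hence $N_P(Q) \subseteq Q\,C_G(Q)$. Intersecting with $P$: if $w=qc \in P$ with $q \in Q$, $c \in C_G(Q)$, then $c=q^{-1}w \in P \cap C_G(Q)=C_P(Q)=Z(Q) \leq Q$, so $w \in Q$; therefore $N_P(Q) \subseteq P \cap QC_G(Q)=Q$, i.e. $N_P(Q)=Q$. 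This contradicts the fact that a proper subgroup of the $p$-group $P$ is properly contained in its normalizer. Hence $Q \notin \Syl_p(C_G(x_0))$, no $p$-regular class of $b$ has defect group $Q$, and $m_b^1(Q)=0$.

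The main obstacle is exactly this last step — converting the hypothesis $Q<P$ into a contradiction. Principal type is used twice: to ensure $b_Q=\Br_Q(b)$ is a single (hence, being of central defect, nilpotent) block, so that the uniqueness of the $p$-regular class is available, and to ensure $b_Q$ is $N_G(Q)$-stable; Kn\"orr's centricity $C_P(Q)=Z(Q)$ is what makes the normalizer computation collapse to $N_P(Q)=Q$. The only nonformal input is the compatibility of $\Br_Q$ with the class sum $\hat{C}$ of a class of defect group $Q$, which is immediate from $\Br_Q$ being a homomorphism together with $\Br_Q(\hat{C})\neq 0$.
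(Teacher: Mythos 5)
Your overall strategy --- localize at $Q$, argue that $\Br_Q(b)$ is a nilpotent block with a unique $p$-regular class, and then derive the contradiction $N_P(Q)=Q$ --- is an attractive alternative to the paper's route (the paper instead reduces to the case $Q\trianglelefteq G$ via Brauer's First Main Theorem and kills the relevant space by a trace-vanishing argument, Lemmas 3.3--3.5). Your final normalizer computation is correct. But the bridge that makes the localization legitimate is broken. You pass from a $p$-regular $G$-class $C\in\Cl_{p'}(b)$ with defect group $Q$ to a $C_G(Q)$-class ``lying in'' $b_Q$ by applying $\Br_Q$ to the identity $b\hat{C}=\hat{C}$. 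That identity is false in general: membership of $C$ in $\Cl_{p'}(b)$ only means that $\hat{C}b$ is a member of a chosen basis of $ZkG_{p'}b$; it does not mean $\hat{C}$ is supported on the block $b$ (already for $G$ cyclic of order $3$ and $p=2$, every nontrivial class sum has nonzero components in all three blocks). Worse, the partition $\Cl(G_{p'})=\bigcup_b\Cl_{p'}(b)$ is not canonical, and the same non-canonicity undermines your step asserting that the unique $p$-regular class of $b_Q$ is $N_G(Q)$-invariant: $N_G(Q)$ permutes the valid class-to-block assignments, but nothing in your argument pins down a single $N_G(Q)$-stable class. The rigorous link between $m_b^1(Q)$ and local data at $Q$ is exactly Brou\'e's formula $m_b^1(Q)=\dim (kC_G(Q)_{p'})_Q^{N_G(Q)}\Br_Q(b)$ (Proposition 2.2 of the paper), which the paper's proof is built on; your formal manipulation does not substitute for it.

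Two further steps are asserted with insufficient justification, although both are true under the principal-type hypothesis. First, Kn\"orr's theorem produces \emph{some} Brauer pair $(R,f)$ with $Z(R)$ a defect group of $f$, equivalently $C_D(R)=Z(R)$ for \emph{some} defect group $D$ containing a suitable conjugate of $R$; it does not give $C_P(R)=Z(R)$ for the \emph{given} $P$ containing $Q$. Transporting centricity of $R$ from one defect group to all of them is precisely where the paper needs $\mcF_{(P,e)}(G,b)=\mcF_P(G)$ (principal type, Linckelmann 8.5.5) together with the conjugation-invariant notion of $\mcF$-centric subgroup (Linckelmann 8.5.3, 8.2.4); you cannot get it from Kn\"orr alone. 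Second, ``by the standard Brauer-pair formula $C_P(Q)$ is a defect group of $e_Q$'' is not a standard formula: it fails unless $Q$ is fully centralized (for instance $G=S_4$, $p=2$, $Q=\la (12)(34)\ra$ inside a Sylow $2$-subgroup $P$: here $C_P(Q)$ has order $4$ while the principal block of $kC_G(Q)$ has defect group of order $8$). Knowing $C_P(Q)=Z(Q)$ for the one given $P$ does not imply it, so the nilpotency of $b_Q$ --- the engine of your whole argument --- is not established. Both gaps are repairable, but only by importing the fusion-system machinery (or an equivalent subpair argument) that the paper's proof uses; as written, the proof does not go through.
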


By Proposition \ref{permutation} and Theorem \ref{principal}, 
we have the following Corollary. 

\begin{cor}\label{loweqvx}
Let $\{x_i\}_{1 \leq i \leq l}$ be a set 
of representatives of $p$-regular conjugacy classes 
of $G$. 
Let $Q_i$ be a Sylow $p$-subgroup of $C_G(x_i)$. 
Let $\{S_i\}_{1 \leq i \leq l}$ be a set of representatives of 
isomorphism classes of simple $kG$-modules. 
Then 
\[\prod_{i=1}^l |Q_i| \leq \prod_{i=1}^l|\vx(S_i)|\]
and the equality holds if and only if 
$G$ is $p$-nilpotent. 
\end{cor}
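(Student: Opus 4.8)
The plan is to reduce the global statement to a block-by-block analysis, using Proposition~\ref{permutation} for the inequality and Theorem~\ref{principal} for the equality. First I would use the decompositions $\Cl(G_{p'})=\bigcup_b\Cl_{p'}(b)$ and $\{S_i\}=\bigcup_b\{S_i : S_i\in b\}$ to split the index set $\{1,\dots,l\}$ according to blocks, and apply Proposition~\ref{permutation} to each block $b$ separately. This produces, for each $b$, a permutation $\sigma_b$ of the indices lying in $b$ with $Q_i\leq_G\vx(S_{\sigma_b(i)})$, hence $|Q_i|\le|\vx(S_{\sigma_b(i)})|$; multiplying over all $i$ gives $\prod_i|Q_i|\le\prod_i|\vx(S_i)|$. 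Since every factor on the left is bounded by the matched factor on the right, global equality forces $|Q_i|=|\vx(S_{\sigma(i)})|$ for every $i$ (with $\sigma=\bigsqcup_b\sigma_b$), and together with $Q_i\leq_G\vx(S_{\sigma(i)})$ this upgrades to $Q_i=_G\vx(S_{\sigma(i)})$ for all $i$; conversely such a term-by-term matching yields equality. So the task becomes deciding exactly when the multiset of lower defect groups agrees, up to $G$-conjugacy, with the multiset of vertices, block by block.

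For the ``if'' direction I would take $G=K\rtimes P$ $p$-nilpotent with $K:=O_{p'}(G)$ and $P\in\Syl_p(G)$. Every $p$-regular element then lies in $K$, and since $kK$ is semisimple the blocks of $kG$ are parametrized by the $P$-orbits on $\Irr(K)$: for $\theta\in\Irr(K)$ with stabilizer $P_\theta\le P$ the corresponding block $b$ has defect group a conjugate of $P_\theta$. Because a twisted group algebra of a $p$-group over $k$ is isomorphic to the ordinary group algebra $kP_\theta$ and hence local, $b$ has a unique simple module $S_b$, whose vertex is its defect group $D_b$; in particular $l(b)=1$ for every block. As $l(b)=1$, the block $b$ contains a single $p$-regular class, and since the defect group of a block always occurs as a lower defect group with multiplicity one, this class has defect group $Q_b=_G D_b$. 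Thus $Q_b=_G D_b=_G\vx(S_b)$ for every block, and the products agree.

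For the ``only if'' direction I would assume equality, so $Q_i=_G\vx(S_{\sigma(i)})$ for all $i$, and prove that the principal block $B_0$ satisfies $l(B_0)=1$; the conclusion then follows from the known characterization that $G$ is $p$-nilpotent if and only if its principal block contains a unique simple module. The key observation is that $B_0$ is of principal type: by Brauer's third main theorem $\Br_Q(B_0)$ is the principal block of $kC_G(Q)$ for every $p$-subgroup $Q$ contained in a defect group $P$, so Theorem~\ref{principal} is available for $B_0$. Suppose $l(B_0)>1$. Since $\sum_Q m_{B_0}^1(Q)=l(B_0)$ while $m_{B_0}^1(P)=1$, some $p$-regular class $C_i$ in $B_0$ has defect group $Q_i<_G P$ with $m_{B_0}^1(Q_i)>0$. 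Equality provides a simple module $S_{\sigma(i)}\in B_0$ with $\vx(S_{\sigma(i)})=_G Q_i$; choosing representatives so that a vertex $R$ of $S_{\sigma(i)}$ satisfies $R=Q_i<P$, Theorem~\ref{principal} forces $m_{B_0}^1(Q_i)=0$, a contradiction. Hence $l(B_0)=1$ and $G$ is $p$-nilpotent.

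The routine part is the passage between products and term-by-term comparison together with the block decomposition. I expect the main obstacle to be marshalling the imported structural facts and invoking them cleanly: that $B_0$ is of principal type so that Theorem~\ref{principal} applies, that the defect group occurs as a lower defect group with multiplicity exactly one, the block structure of $p$-nilpotent groups used in the ``if'' direction, and the characterization of $p$-nilpotency by $l(B_0)=1$ used in the ``only if'' direction. A secondary technical care-point is the conjugacy bookkeeping: lower defect groups and vertices are defined only up to $G$-conjugacy, so one must align the chain $R\le Q<P$ consistently before applying Theorem~\ref{principal}.
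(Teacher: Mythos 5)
Your proof is correct and follows essentially the same route as the paper: Proposition~\ref{permutation} (applied blockwise) gives the inequality, the block structure of $p$-nilpotent groups gives the ``if'' direction, and Theorem~\ref{principal} together with Brauer's Third Main Theorem applied to the principal block gives the ``only if'' direction. Your write-up merely makes explicit the term-by-term matching, the Fong--K\"{u}lshammer description of blocks of $p$-nilpotent groups, and the contradiction argument via $m_{b_0}^1(Q_i)>0$, all of which the paper leaves implicit.
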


We prove these results in section \ref{Proof}.  
In section \ref{Blocks}, we consider blocks of $p$-solvable groups. 
We show a result which is slightly weaker than Theorem \ref{principal} 
without the assumption that $b$ is of principal type.   
Finally, in section \ref{Complexity}, we add some results on the 
complexity of modules. 

In this paper, all $kG$-modules are finite generated right 
modules. For an indecomposable $kG$-module $M$, 
we denote a vertex of $M$ by $\vx(M)$. 
We refer to \cite{F}, \cite{L1}, \cite{L2} and \cite{NT} for modular 
representations of finite groups.


\section{Lower defect groups of a block}\label{Lower}
Lower defect groups of a block are defined by Brauer \cite{Bra} 
and some related results can be found in \cite{B79}, \cite{I}, \cite{O} 
or \cite[V, Section 10]{F}, \cite[Chapter 5, Section 11]{NT}.
In this section  we quote some known results used in this paper. 
We consider lower defect groups associated with $p$-regular 
classes only. 

Let $G_{p'}$ be the set of all $p$-regular elements 
of $G$. We denote by $kG_{p'}$ the $k$ subspace of $kG$ spanned  by $G_{p'}$. 
If $C$ is a conjugacy class of $G$, we set 
$\hat{C}=\sum_{x \in C}x \in kG$. Then 
$\{\hat{C} \ | \ C \in \Cl(G)\}$ is a basis 
of the center $ZkG$ of $kG$.
Let $\Cl(G_{p'})$  be the set of all $p$-regular 
conjugacy classes of $G$. 
Then $\{\hat{C} \ | \ C \in \Cl(G_{p'})\}$ is a basis of 
$ZkG_{p'}=ZkG \cap kG_{p'}$. 
On the other hand, 
\[ZkG_{p'}=\bigoplus_b ZkG_{p'}b=\bigoplus_b ZkG_p'\cap ZkGb\]
where $b$ ranges over all blocks of $kG$ and  
there exists a disjoint decomposition  
\[\Cl(G_{p'})=\bigcup_b \Cl_{p'}(b)\]
such that 
$\{\hat{C} b \ | \ C \in \Cl_{p'}(b)\}$ is a basis of 
$ZkG_{p'}b$ for any block $b$. 
 
Let $\Cl_{p'}(b)=\{C_i\}_{1 \leq i \leq l(b)}$ and  
$Q_i$ a defect group of $C_i$, that is, 
$Q_i$ is a Sylow $p$-subgroup of $C_G(x_i)$ where 
$x_i \in C_i$. 
We denote by $m_b^1(Q)$ the 
multiplicity of $Q$ as a lower defect group 
of $b$ associated with $p$-regular classes, that is, 
$m_b^1(Q)$ is a number of 
$i$ $(1 \leq i \leq l(b))$ such that $Q =_G Q_i$. 
Here, for subgroups $H,K$ of $G$, 
we write $H=_GK$ if $H$ and $K$ are $G$ conjugate.  

Let $M$ be a $kG$-module and $H \subset K$ be subgroups of $G$. 
Let $M^H$ be the set of fixed points of $H$ in $M$. 
We denote by $\Tr_H^K$ the trace map from $M^H$ to 
$M^K$ defined by 
\[\Tr_H^K(m)=\sum_{g \in H \backslash K}mg.\]
We set $M_H^K=\Tr_H^K(M^H)$. 
The group $G$ acts on $kG$ and $kG_{p'}$ by 
conjugation and we can define the trace maps  
\[\Tr_H^K: (kG)^H \lorarr (kG)^K\]
and 
\[\Tr_H^K: (kG_{p'})^H \lorarr (kG_{p'})^K. \]

The proof of the following basic properties of 
lower defect groups are found in \cite[V, Section 10]{F} and 
\cite[Chapter 5, Section 11]{NT}. 
 
\begin{prop} 
Let $b$ be a block of $kG$ and $P$ a defect group 
of $b$. Then the following holds. 
\\
(1) If $Q$ is a $p$-subgroup of $G$, then $m_b^1(Q)=\dim (kG_{p'})_Q^G/\sum_{R<Q} (kG_{p'})_R^G$.
\\
(2) If $m_b^1(Q)>0$ then $Q$ is conjugate to a subgroup 
of $P$ and $m_b^1(P)=1$. 
\\
(3) Let $\Cl_{p'}(b)=\{C_i\}_{1 \leq i \leq l(b)}$ and $Q_i$ a defect 
group of $C_i$. Then 
$l(b)=|\Cl_{p'}(b)|$ is the number of isomorphism classes of 
simple $kGb$-modules and $\{|Q_i| \ | \ 1\leq  i \leq l(b)\}$ is the 
set of elementary divisors of the Cartan matrix of $kGb$.  
\\
(4) Let $m \geq 0$. Then 
$\sum_Q m_b^1(Q)$ where $Q$ ranges over the 
set of representatives of conjugacy classes of subgroups of $G$ of order $p^m$ 
is the multiplicity of $p^m$ as an elementary 
divisor of the Cartan matrix of $kGb$.
\end{prop}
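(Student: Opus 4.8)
The plan is to treat part (1) as the linchpin, derive (2) from it by standard ideal/Brauer-homomorphism arguments, and obtain (3) and (4) from Brauer's theorem matching the elementary divisors of the Cartan matrix with the orders of the lower defect groups. First I would prove the dimension formula in (1) by a direct computation with relative traces; this is where the combinatorial definition of $m_b^1(Q)$ (the number of $p$-regular classes of defect $=_G Q$) gets converted into the filtration-theoretic one. The filtration is $(kG_{p'})_R^G$ as $R$ ranges over $p$-subgroups, and everything is block-compatible because relative trace images are ideals of $ZkG$, so $(kG_{p'})_R^G=\bigoplus_b (kG_{p'})_R^G b$.

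For (1), fix a $p$-regular $y$ lying in a $G$-class $C$ with defect group $Q_C$, and let $R$ be a $p$-subgroup. The $R$-conjugation orbit sum of $y$ is $O=\Tr_{C_R(y)}^R(y)$, and a short composition of trace maps (using that $y$ is central in $C_G(y)$ and $C_R(y)=R\cap C_G(y)$) gives
\[\Tr_R^G(O)=\Tr_{C_R(y)}^G(y)=[C_G(y):C_R(y)]\,\hat C.\]
The coefficient is nonzero in $k$ exactly when $p\nmid [C_G(y):C_R(y)]$, i.e. when $C_R(y)$ contains a Sylow $p$-subgroup of $C_G(y)$, i.e. when a defect group of $C$ lies in $R$; equivalently $Q_C\leq_G R$. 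Since these $\hat C$ are distinct members of the class-sum basis of $ZkG_{p'}$, this shows $(kG_{p'})_R^G=\mathrm{span}\{\hat C: Q_C\leq_G R\}$. Hence $\sum_{R<Q}(kG_{p'})_R^G=\mathrm{span}\{\hat C: Q_C<_G Q\}$, so the quotient at $Q$ has basis $\{\hat C: Q_C=_G Q\}$. Multiplying by $b$ and using block-compatibility of the filtration, the images of $\hat C b$ with $C\in\Cl_{p'}(b)$ and $Q_C=_G Q$ give a basis of $(kG_{p'})_Q^G b/\sum_{R<Q}(kG_{p'})_R^G b$, whose dimension is therefore $m_b^1(Q)$.

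For (2), since a defect group $P$ of $b$ satisfies $b\in (kG)_P^G$ and $(kG)_P^G$ is an ideal of $ZkG$, we get $ZkGb\subseteq (kG)_P^G$; in particular every $\hat C b$ with $C\in\Cl_{p'}(b)$ lies in $(kG_{p'})_P^G$, and the spanning criterion from (1) forces $Q_C\leq_G P$, which is the first assertion. For $m_b^1(P)=1$ I would analyze the top graded piece: the Brauer homomorphism $\Br_P$ identifies $(kG_{p'})_P^G b/\sum_{R<P}(kG_{p'})_R^G b$ with the span of the defect-zero $p$-regular class sums of $kC_G(P)$ lying in the block $\Br_P(b)$, and this space is one-dimensional (it is the line spanned by the defect class). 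This gives $m_b^1(P)=1$ by (1).

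Parts (3) and (4) are two packagings of Brauer's theorem that the multiset of elementary divisors of the Cartan matrix of $kGb$ is $\{|Q_i|\}_{1\leq i\leq l(b)}$; given that theorem, (4) follows by grouping the $Q_i$ according to $|Q_i|=p^m$ and by grouping conjugacy classes of subgroups of order $p^m$ via (1), and (3) is its full-multiset form. I would prove the theorem itself by setting up the perfect $\mcO$-bilinear pairing between the class-sum lattice of $Z\mcO G_{p'}b$ and the lattice of projective (indecomposable) characters of $b$, whose Gram matrix is the Cartan matrix $C_b$, and then showing that the relative-trace filtration by defect on the center corresponds, under Smith normal form, to the $p$-adic filtration by elementary divisors, so that a class sum of defect $Q$ contributes precisely the elementary divisor $|Q|$. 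I expect this last correspondence to be the main obstacle: one must prove that the defect filtration on $Z\mcO G_{p'}b$ is exactly intertwined with the valuation filtration of $C_b$, which requires the integral duality together with a careful Smith-normal-form and $p$-adic valuation argument rather than the elementary trace computation used for (1).
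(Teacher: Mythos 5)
The first thing to note is that the paper offers no proof of this proposition at all: it is stated as a collection of classical facts and referred to \cite[V, Section 10]{F} and \cite[Chapter 5, Section 11]{NT}. So your attempt has to stand as a self-contained proof of Brauer's theory, and can only be judged on its own. The part that does stand is the blockless computation in (1): the identity $\Tr_R^G(\Tr_{C_R(y)}^R(y))=[C_G(y):C_R(y)]\,\hat{C}$ and the resulting description $(kG_{p'})_R^G=\mathrm{span}\{\hat{C} : Q_C\leq_G R\}$ are correct, and this is exactly the standard argument.

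The gaps begin when you pass to the block $b$. You assert that the images of $\hat{C}b$ with $C\in\Cl_{p'}(b)$, $Q_C=_G Q$ ``give a basis'' of $(kG_{p'})_Q^G b/\sum_{R<Q}(kG_{p'})_R^G b$, but this is not automatic: $(kG_{p'})_Q^G b$ is spanned by the elements $\hat{C}b$ with $Q_C\leq_G Q$ where $C$ runs over \emph{all} $p$-regular classes, and $\hat{C}b\neq 0$ in general even when $C\notin \Cl_{p'}(b)$. The block distribution of classes only says that $\{\hat{C}b : C\in\Cl_{p'}(b)\}$ is a basis of $ZkG_{p'}b$, not that this basis is adapted to the defect filtration; that adaptedness is exactly what (1) claims, so your argument is circular at this point. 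It is fixable by a counting argument: the elements $\hat{C}b$ with $C\in\Cl_{p'}(b)$, $Q_C\leq_G Q$ are linearly independent and lie in $(kG_{p'})_Q^G b$, so $n_b(Q):=|\{C\in\Cl_{p'}(b):Q_C\leq_G Q\}|\leq \dim (kG_{p'})_Q^G b$; summing over blocks, $\sum_b n_b(Q)=\dim (kG_{p'})_Q^G=\sum_b \dim (kG_{p'})_Q^G b$ forces equality block by block, and the same for $\sum_{R<Q}$. The identical repair is needed in your (2): the membership $\hat{C}b\in (kG_{p'})_P^G$ does not by itself ``force'' $Q_C\leq_G P$, since $\hat{C}b$ is not $\hat{C}$, and lying in a span of class sums of defect $\leq_G P$ constrains nothing about $Q_C$ without the adapted-basis statement. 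The genuinely missing pieces, which cannot be patched in a line, are: (i) the one-dimensionality behind $m_b^1(P)=1$ --- that $(kC_G(P)_{p'})_P^{N_G(P)}\Br_P(b)$ is ``the line spanned by the defect class'' is precisely Brauer's theorem on existence and uniqueness of the defect class (equivalently, that $|P|$ occurs with multiplicity one as an elementary divisor of the Cartan matrix), and you assert it rather than prove it; and (ii) all of (3) and (4), where you yourself flag the intertwining of the defect filtration on $Z\mcO G_{p'}b$ with the Smith-normal-form valuation filtration of $C_b$ as ``the main obstacle'' --- but that intertwining \emph{is} the theorem being claimed, so (3) and (4) remain unproved in your proposal.
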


We need the following result in Lemma \ref{normal} and 
Theorem \ref{allcentric}.

\begin{prop}[{\cite[Proposition (II) 1.3]{B79}}]
\label{ldBroue}
If $Q$ is a $p$-subgroup of $G$ then
\[m_b^1(Q)=
\dim (kC_G(Q)_{p'})_Q^{N_G(Q)}\Br_Q(b)\]
where $\Br_Q: ZkG \lorarr ZkN_G(Q)$ is the 
Brauer homomorphism. 
\end{prop}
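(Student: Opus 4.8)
The plan is to begin with the relative trace description of the multiplicity and to transport it through the Brauer homomorphism. By part (1) of the Proposition stated above, applied inside the block $b$, the integer $m_b^1(Q)$ is the dimension of
\[
\frac{(kG_{p'})_Q^G\, b}{\sum_{R<Q}(kG_{p'})_R^G\, b},
\]
where the denominator does lie inside the numerator because $\Tr_R^G=\Tr_Q^G\circ\Tr_R^Q$ for $R\le Q$. So the whole task is to identify this quotient with $(kC_G(Q)_{p'})_Q^{N_G(Q)}\Br_Q(b)$. The essential tool is the standard description of $\Br_Q$: the homomorphism $\Br_Q\colon (kG)^Q\lorarr kC_G(Q)$ that projects a $Q$-conjugation orbit sum onto its $C_G(Q)$-supported part is a surjective algebra map whose kernel is exactly the relative trace ideal $\sum_{R<Q}(kG)_R^Q$. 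It is moreover $N_G(Q)$-equivariant and restricts to the $p$-regular parts, so it induces an isomorphism $(kG_{p'})^Q/\sum_{R<Q}(kG_{p'})_R^Q\isoarr kC_G(Q)_{p'}$.

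Next I would combine this isomorphism with the factorisation $\Tr_Q^G=\Tr_{N_G(Q)}^G\circ\Tr_Q^{N_G(Q)}$. Because $Q$ centralises $C_G(Q)$, every $w\in kC_G(Q)_{p'}$ already lies in $(kG_{p'})^Q$ and satisfies $\Br_Q(w)=w$; hence $w\mapsto \Tr_Q^G(w)$ induces a surjection of $kC_G(Q)_{p'}$ onto the quotient above, killing $\sum_{R<Q}(kG_{p'})_R^Q$ by the kernel description of $\Br_Q$. This surjection factors as
\[
kC_G(Q)_{p'}\xrightarrow{\ \Tr_Q^{N_G(Q)}\ }(kC_G(Q)_{p'})_Q^{N_G(Q)}\xrightarrow{\ \overline{\Tr_{N_G(Q)}^G}\ }\frac{(kG_{p'})_Q^G}{\sum_{R<Q}(kG_{p'})_R^G},
\]
in which the first arrow is surjective by definition. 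Thus the target has dimension $\dim (kC_G(Q)_{p'})_Q^{N_G(Q)}$ as soon as the second arrow $\overline{\Tr_{N_G(Q)}^G}$ is injective, and the passage to the block $b$ is then automatic: since $\Br_Q$ is an algebra homomorphism, multiplication by $b$ upstairs corresponds to multiplication by the idempotent $\Br_Q(b)$ on $kC_G(Q)_{p'}$.

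The injectivity of $\overline{\Tr_{N_G(Q)}^G}$ is the step I expect to be the main obstacle, and I would attack it with Mackey's formula. Expanding $\Res_Q\Tr_R^G(x)$ for $R<Q$ over the double cosets in $Q\backslash G/R$ yields only terms $\Tr_{Q\cap {}^gR}^Q(\cdots)$ with $Q\cap {}^gR<Q$ (forced by $|R|<|Q|$), each annihilated by $\Br_Q$; hence $\Br_Q$ kills $\sum_{R<Q}(kG_{p'})_R^G$ and descends to the quotient, so it suffices to prove that $\Br_Q\circ\Tr_{N_G(Q)}^G$ is injective on $(kC_G(Q)_{p'})_Q^{N_G(Q)}$ modulo that kernel. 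Expanding $\Res_Q\Tr_{N_G(Q)}^G(d)$ by Mackey, the identity double coset returns $d$ itself, while the remaining terms survive $\Br_Q$ only for the $g$ with $Q\le {}^gN_G(Q)$. The delicate point is exactly to show that the net contribution of these extra double cosets vanishes in $(kG_{p'})_Q^G/\sum_{R<Q}(kG_{p'})_R^G$, using the $C_G(Q)$-support of $d$ and, if needed, the restriction to the block via $\Br_Q(b)$; granting this, $\overline{\Tr_{N_G(Q)}^G}$ is injective and the dimension count gives $m_b^1(Q)=\dim (kC_G(Q)_{p'})_Q^{N_G(Q)}\Br_Q(b)$.
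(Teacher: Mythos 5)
The paper itself gives no proof of this proposition (it is quoted from Brou\'{e} \cite{B79}), so your attempt has to be judged on its own terms. Your architecture is the standard one and its formal steps are correct: the quotient description of $m_b^1(Q)$, the description of $\Br_Q$ on $(kG)^Q$ with kernel $\sum_{R<Q}(kG)_R^Q$, the surjectivity of the map induced by $\Tr_Q^G$ (hence of $\overline{\Tr_{N_G(Q)}^G}$ after factoring through $\Tr_Q^{N_G(Q)}$), the observation that $\Br_Q$ kills $\sum_{R<Q}(kG_{p'})_R^G$ by Mackey, and the passage to blocks via multiplicativity of $\Br_Q$. But the proof is incomplete at exactly the point you flag: the injectivity of $\overline{\Tr_{N_G(Q)}^G}$ is the \emph{only} non-formal content of the proposition, and you do not prove it --- you reduce it to the claim that the extra double cosets contribute nothing and then write ``granting this.'' As it stands this is a genuine gap, not a detail: for a general element $d$ supported on $C_G(Q)$ there is no reason for the terms $\Br_Q(d^g)$ with $Q\le {}^g N_G(Q)$, $g \notin N_G(Q)$, to vanish; the vanishing really uses that $d$ lies in the image of $\Tr_Q^{N_G(Q)}$, and your Mackey decomposition over $N_G(Q)\backslash G/Q$ hides this.

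The fix stays entirely inside your framework but uses the other Mackey decomposition. Since $Q$ acts trivially on $kC_G(Q)_{p'}$, every $d\in (kC_G(Q)_{p'})_Q^{N_G(Q)}$ can be written as $d=\Tr_Q^{N_G(Q)}(w)$ with $w\in kC_G(Q)_{p'}$, so that $\Tr_{N_G(Q)}^G(d)=\Tr_Q^G(w)$. Now expand $\Res_Q\Tr_Q^G(w)$ over $Q\backslash G/Q$: a double coset survives $\Br_Q$ only if $Q\cap Q^g=Q$, i.e.\ $Q=Q^g$, i.e.\ $g\in N_G(Q)$, and for such $g$ one has $\Br_Q(w^g)=w^g$. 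Hence $\Br_Q(\Tr_{N_G(Q)}^G(d))=\Br_Q(\Tr_Q^G(w))=\Tr_Q^{N_G(Q)}(w)=d$ with no extra terms whatsoever; in other words $\Br_Q\circ\Tr_Q^G=\Tr_Q^{N_G(Q)}\circ\Br_Q$ on $(kG_{p'})^Q$. So the map induced by $\Br_Q$ on $(kG_{p'})_Q^G/\sum_{R<Q}(kG_{p'})_R^G$ is a left inverse of $\overline{\Tr_{N_G(Q)}^G}$, which together with your surjectivity statement makes both maps isomorphisms, and multiplicativity of $\Br_Q$ then gives the block statement. (Alternatively, one can kill each extra coset in your expansion directly: $d^g$ lies in the span of $N_G(Q^g)$-class sums of elements $y$ with $Q^g\in\Syl_p(C_{N_G(Q^g)}(y))$, whereas any $y$ in $C_G(Q)\cap C_G(Q^g)$ centralizes the $p$-group $QQ^g\le N_G(Q^g)$, which strictly contains $Q^g$; hence $\Br_Q(d^g)=0$. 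But this, or the commutation identity above, is precisely the argument you needed to supply.)
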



\section{Proof of main results}\label{Proof}

The following Proposition is proved in 
{\cite[Theorem 10, Corollary]{G}}. 
It is obtained from 
{\cite[p.243, Corollaire]{B76}} or   
{\cite[IV, Theorem 2.3]{F}} also.

 \begin{prop}\label{Green}
Let $M$ be an indecomposable  $kG$-module. 
\\
(1) Let $\vphi$ be the Brauer character 
corresponding to $M$. 
Let $x \in G_{p'}$ and $Q \in \Syl_p(C_G(x))$. If 
\[\vphi(x) \not\in p\mcO\]
then $\Res^G_Q M$ has an indecomposable direct summand $N$ 
such that $\dim N \not\equiv 0 \bmod p$. 
\\
(2) If $Q$ is a $p$-subgroup of $G$ and $\Res^G_Q M$ has an indecomposable direct summand $N$ 
such that $\dim N \not\equiv 0 \bmod p$, then $Q \leq_G \vx(M)$.
\end{prop}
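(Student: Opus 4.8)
The plan is to handle the two parts by different mechanisms. The heart of part (2) is an elementary local statement that I would isolate first: \emph{for a $p$-group $Q$, an indecomposable $kQ$-module $N$ with $\dim N \not\equiv 0 \bmod p$ has vertex $Q$}. The proof is a trace computation. If $N$ were relatively $R$-projective for some $R<Q$, then $\id_N=\Tr_R^Q(\psi)$ for some $\psi\in\End_{kR}(N)$; applying the ordinary $k$-linear trace and using that conjugation by a coset representative preserves it gives $\dim N=[Q:R]\,\mathrm{tr}(\psi)$ in $k$, and $[Q:R]\equiv 0\bmod p$ forces $\dim N\equiv 0\bmod p$, a contradiction. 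So no proper subgroup can be a vertex.

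For part (2), write $V=\vx(M)$, so $M$ is a summand of $\Ind^G_V L$ for a source $L$. By the Mackey formula $\Res^G_Q\Ind^G_V L$ decomposes into a sum of modules each induced from a subgroup of the form $Q\cap {}^gV$. The given indecomposable summand $N$ of $\Res^G_Q M$ with $\dim N\not\equiv 0\bmod p$ therefore lies in one such induced module, hence is relatively $(Q\cap {}^gV)$-projective; but the lemma says $\vx(N)=Q$, so $Q\cap {}^gV=Q$, i.e. $Q\leq {}^gV$, which is exactly $Q\leq_G\vx(M)$.

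For part (1), set $H=C_G(x)$ and decompose $\Res^G_H M=\bigoplus_j N_j$ into indecomposables. Since $x$ is central in $H$ and $p$-regular, on each $N_j$ it acts as a semisimple central endomorphism; as $\End_{kH}(N_j)$ is local with nilpotent radical and residue field $k$, the difference between $x$ and its scalar part $\mu_j\in k$ is both semisimple and nilpotent, hence zero, so $x$ acts as the scalar $\mu_j$, a root of unity of order prime to $p$. Thus the eigenvalues of $x$ on $M$ are the $\mu_j$ with multiplicities $\dim N_j$, and $\vphi(x)=\sum_j(\dim N_j)\hat\mu_j$ with $\hat\mu_j\in\mcO$ the lift of $\mu_j$. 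I would then reduce this identity modulo $p\mcO$: the lifts $\hat\mu_j$ lie in the maximal unramified subring $W$ of $\mcO$, where $p\mcO\cap W=pW$ and $W/pW=k$, so $\vphi(x)\notin p\mcO$ is equivalent to $\sum_j\overline{\dim N_j}\,\mu_j\neq 0$ in $k$. This nonvanishing forces $p\nmid\dim N_j$ for some $j$. Finally, $\dim N_j$ is the sum of the dimensions of the indecomposable $kQ$-summands of $\Res^H_Q N_j$, so at least one such summand $N$ has dimension prime to $p$, and $N\mid\Res^G_Q M$ as required (here $Q\in\Syl_p(C_G(x))$ enters only through $Q\leq H$).

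The main obstacle is the bookkeeping in part (1): verifying cleanly that a $p$-regular central element acts as a \emph{single} scalar on each indecomposable summand, and then pinning down the exact equivalence between $\vphi(x)\in p\mcO$ and the vanishing of $\sum_j\overline{\dim N_j}\,\mu_j$ in $k$, which requires knowing that all Brauer lifts lie in the unramified part so that reduction modulo $p\mcO$ agrees with reduction modulo $J(\mcO)$. Part (2) is routine once the trace lemma is in hand.
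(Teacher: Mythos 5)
Your proof is correct. A point of reference: the paper does not prove this proposition at all --- it cites Green \cite{G}, Brou\'{e} \cite{B76} and Feit \cite{F} --- so the comparison is with the classical arguments behind those citations. Your part (2) is exactly the standard proof: Higman's criterion plus the linear-trace computation (if $\id_N=\Tr_R^Q(\psi)$ then $\dim N\cdot 1_k=[Q:R]\,\mathrm{tr}(\psi)=0$ in $k$) shows an indecomposable $kQ$-module of dimension prime to $p$ has vertex $Q$, and Mackey then forces $Q=Q\cap {}^g\vx(M)$. Your part (1), however, is a genuine variant. The classical route restricts $M$ to the direct product $L=\langle x\rangle\times Q$ (which is a group because $x$ centralizes $Q$ and has order prime to $p$); since $kL\cong kQ\otimes_k k\langle x\rangle$ and $k\langle x\rangle$ is split semisimple, every indecomposable $kL$-module is of the form $k_\mu\otimes U$ with $U$ an indecomposable $kQ$-module, so $\vphi(x)=\sum_j(\dim U_j)\hat\mu_j$ and the desired summand of $\Res^G_Q M$ of dimension prime to $p$ appears in a single step. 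You instead restrict to the full centralizer $H=C_G(x)$ and prove scalar action of $x$ on each indecomposable $kH$-summand via the local endomorphism ring and Jordan decomposition; this is a correct soft replacement for the structure theory of $kL$-modules (the difference from the scalar part is nilpotent, and semisimple because $x^m=1$ with $p\nmid m$, hence zero), at the cost of one extra restriction from $H$ down to $Q$. Two minor streamlinings: the unramified-subring discussion is unnecessary for the implication actually claimed, since if every $\dim N_j\equiv 0\bmod p$ then $\vphi(x)=\sum_j(\dim N_j)\hat\mu_j\in p\mcO$ trivially; and you should remark that the Teichm\"{u}ller lifts $\hat\mu_j$ do lie in $\mcO$, which follows from Hensel's lemma because $k$ is algebraically closed and $\mcO$ is complete.
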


Proposition \ref{permutation} is immediate from 
this Proposition. 
\vspace{.5cm}\\
(Proof of Proposition \ref{permutation})

Let $\vphi_i$ be the Brauer character of $S_i$. 
Let $\Cl_{p'}(b)=\{C_i\}_{1 \leq i \leq l(b)}$ be the $p$-regular 
conjugacy classes distributed into $b$ in the 
block decomposition of $\Cl(G_{p'})$. 
Fix $x_i \in C_i$ for each $i$. We may assume that 
$Q_i$ is a Sylow $p$-subgroup of $C_G(x_i)$ where $x_i \in C_i$.    
Then the determinant of the matrix 
$(\vphi_i(x_j))_{1 \leq i,j \leq l(b)}$ is not contained in 
$J(\mcO)$ by \cite[Chapter 5, Theorem 11.6]{NT} 
and there exists a permutation $\sigma$ 
such that 
\[\prod_{i} \vphi_{\sigma(i)}(x_i) \not\in J(\mcO).\]
Since $p\mcO \subset J(\mcO)$, the result follows from Proposition \ref{Green}. 
\vspace{.5cm}

Next we prove Theorem \ref{principal}. 
First we study the case that $Q$ is a normal subgroup of $G$.

\begin{lem}\label{trace0}
Let $G \triangleright H$. 
Let $b$ be a block of $kH$ of defect $0$. 
Let $T=T(b)$ be the inertial group of $b$ in $G$. 
Assume that $|T:H| \equiv 0 \bmod p$. 
Then $b(kH)_1^G=0$. 
\end{lem}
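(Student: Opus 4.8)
The plan is to unwind the definition $(kH)_1^G=\Tr_1^G((kH)^1)=\Tr_1^G(kH)$, regarding $kH$ as a $kG$-module under conjugation (legitimate since $H\triangleleft G$), and then to show that multiplication by the central idempotent $b$ annihilates this image. Everything rests on two reductions: first cutting the trace $\Tr_1^G$ down to a trace over the inertial group $T$, and then using the defect-zero hypothesis together with $p\mid |T:H|$.

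For the first reduction I would write $\Tr_1^G(m)=\sum_{g\in G}g^{-1}mg$ for $m\in kH$ and analyze $b\cdot g^{-1}mg$ term by term. Since $b$ is central in $kH$ and distinct blocks of $kH$ are orthogonal, one gets $b\,g^{-1}mg=g^{-1}(m\cdot{}^{g}b)\,g$, where ${}^{g}b=gbg^{-1}$ is again a block of $kH$ and $m\cdot{}^{g}b$ is the component of $m$ in the block $kH\cdot{}^{g}b$. As $g$ runs through a left coset $g_jT$ the conjugate ${}^{g}b$ stays constant, equal to the orbit representative $b_j={}^{g_j}b$, because $T=T(b)$ stabilizes $b$ and $H\le T$. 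Grouping the sum over these cosets and substituting $n_j=g_j^{-1}mg_j\cdot b\in kHb$, the sum collapses to $\sum_j\Tr_1^T(n_j)$ with $\Tr_1^T(n_j)=\sum_{t\in T}t^{-1}n_jt$. This yields the inclusion $b\,(kH)_1^G\subseteq b\,\Tr_1^T(kHb)$, so it suffices to prove that the right-hand side is zero.

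For the second reduction I would factor $\Tr_1^T=\Tr_H^T\circ\Tr_1^H$. For $n\in kHb$ the inner trace $\Tr_1^H(n)$ lies in $(kHb)^H$, and this is exactly where the defect-zero hypothesis is decisive: since $kHb$ is spanned by the conjugation images of $H$, one has $(kHb)^H=Z(kHb)$, and defect zero makes $kHb$ a simple (matrix) algebra, forcing $Z(kHb)=kb$. Thus $\Tr_1^H(n)=\lambda b$ for some $\lambda\in k$, and applying $\Tr_H^T$ with ${}^{t}b=b$ for $t\in T$ gives $\Tr_H^T(\lambda b)=\lambda\,|T:H|\,b$. Hence $b\,\Tr_1^T(n)=\lambda\,|T:H|\,b=0$, since $p\mid |T:H|$ forces $|T:H|=0$ in $k$.

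I expect the only real obstacle to be the coset bookkeeping in the first reduction: tracking how the surviving block-component $m\cdot{}^{g}b$ depends on the coset of $g$ and verifying that the terms reassemble cleanly into $\Tr_1^T$ over $kHb$, together with invoking the defect-zero input at precisely the right place to force $Z(kHb)=kb$. Once those are in hand the remaining computation is purely formal.
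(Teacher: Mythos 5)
Your proof is correct and follows essentially the same route as the paper: reduce the trace from $G$ to the inertial group $T$ (your explicit coset-by-coset computation with block orthogonality is just a hand-unwound version of the Mackey decomposition step the paper cites), then factor $\Tr_1^T=\Tr_H^T\circ\Tr_1^H$, use defect zero to force $\Tr_1^H(bkH)\subseteq Z(kHb)=kb$, and kill the result with $\Tr_H^T(b)=|T:H|\,b=0$ in $k$. No gaps; the two arguments differ only in whether the first reduction is cited or computed.
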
 

\begin{proof}
By the Mackey decomposition, we have 
\[(kH)_1^G\subset 
\sum_{t \in G/T}((kH)^t)_1^T=(kH)_1^T.\]
Since $b$ is $T$-invariant, we have  
\[b(kH)_1^T=b \Tr_1^T(kH)=
\Tr_1^T(bkH)=\Tr_H^T(\Tr_1^H(b kH)).\]
Then 
\[\Tr_1^H(bkH)=Z(bkH)=kb\]
since $b$ is a block of defect $0$, and  
\[
\Tr_H^T(\Tr_1^H(b kH))=
\Tr_H^T(kb)=0\]
since $|T:H| \equiv 0 \bmod p$.
\end{proof}

\begin{lem}\label{QCGQ/Q}
Let $Q$ be a normal $p$-subgroup of $G$ and 
$\ol{G}=G/Q$. 
Let $\mu : kG \lorarr k\ol{G}$ be the surjective 
algebra homomorphism induced by the natural 
surjective homomorphism $G \lorarr \ol{G}$. 
Let $b$ be a block of $kG$ and 
$\ol{b}=\mu(b)$. 
Then $\mu$ induces an isomorphism 
\[b\cdot (kC_G(Q)_{p'})_Q^G 
\cong 
\ol{b} \cdot (k(QC_G(Q)/Q)_{p'})_{\ol{1}}^{\ol{G}}\] 
\end{lem}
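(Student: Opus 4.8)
The plan is to reduce the statement to the injectivity of $\mu$ on the \emph{whole} $p$-regular subalgebra $kC_G(Q)_{p'}$, which is transparent, and then to show that the left-hand space actually lies inside $kC_G(Q)_{p'}$. The clean fact I would record first is that $\mu$ restricts to a $k$-linear isomorphism $kC_G(Q)_{p'}\isoarr k(QC_G(Q)/Q)_{p'}$. Indeed $\mu$ sends each $p$-regular element of $C_G(Q)$ to a $p$-regular element of $QC_G(Q)/Q$, and this is a bijection of the two natural $k$-bases: every $p$-regular element of $QC_G(Q)/Q$ is the image of the $p$-regular part of any of its lifts in $C_G(Q)$, and if $x,y\in C_G(Q)_{p'}$ satisfy $\ol x=\ol y$ then $y=xz$ with $z\in Q\cap C_G(Q)=Z(Q)$, whence $z=1$ because $x$ and $z$ commute and $xz$ is $p$-regular. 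In particular $\mu$ is injective on $kC_G(Q)_{p'}$.

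Next, since $Q=\Ker(G\lorarr\ol G)$ identifies $Q\backslash G$ with $\ol G$ and $Q$ centralizes $C_G(Q)$, a direct computation gives $\mu\circ\Tr_Q^G=\Tr_{\ol1}^{\ol G}\circ\mu$ on $kC_G(Q)_{p'}$. Combined with the previous paragraph this shows that $\mu$ carries $(kC_G(Q)_{p'})_Q^G$ onto $(k(QC_G(Q)/Q)_{p'})_{\ol1}^{\ol G}$; multiplying by $b$ and using $\mu(bv)=\ol b\,\mu(v)$ then shows that $\mu$ maps $b\,(kC_G(Q)_{p'})_Q^G$ \emph{onto} $\ol b\,(k(QC_G(Q)/Q)_{p'})_{\ol1}^{\ol G}$. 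Thus surjectivity is automatic, and the whole content is injectivity, which will follow once I know that the left-hand space is contained in $kC_G(Q)_{p'}$.

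The crux, and the one genuinely non-formal step, is the claim that $b\in kC_G(Q)$, i.e.\ the block idempotent is supported on $C_G(Q)$. To prove it I would write $b=\sum_C\beta_C\hat C$ as a sum of $p$-regular class sums (Osima) and set $e=\Br_Q(b)=\sum_{C\subseteq C_G(Q)}\beta_C\hat C$, the central idempotent obtained by discarding the classes meeting $G\setminus C_G(Q)$. For a $p$-regular class $C$ with $C\cap C_G(Q)=\emptyset$ the group $Q$ acts by conjugation on $C$ without fixed points, so every $Q$-orbit has size divisible by $p$ and hence $\mu(\hat C)=\sum_{x\in C}\ol x=0$; therefore $\mu(b)=\mu(e)$. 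Since $\Ker\mu\cap ZkG\subseteq J(kG)\cap ZkG$ is a nil ideal of the commutative ring $ZkG$, and idempotents are rigid modulo a nil ideal, the equality $\mu(b)=\mu(e)$ of central idempotents forces $b=e\in kC_G(Q)$.

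Granting this, the left-hand space indeed sits inside $kC_G(Q)_{p'}$: for $v\in(kC_G(Q)_{p'})_Q^G\subseteq kC_G(Q)_{p'}$ the element $bv$ lies in $kC_G(Q)$ (a subalgebra containing both $b$ and $v$) and in $ZkG_{p'}$ (being the $b$-component of $v$ in the decomposition $ZkG_{p'}=\bigop_{b'}ZkG_{p'}b'$), hence in $kC_G(Q)_{p'}$. Injectivity of $\mu$ on $b\,(kC_G(Q)_{p'})_Q^G$ therefore follows from its injectivity on $kC_G(Q)_{p'}$, and together with the surjectivity established above this gives the asserted isomorphism. I expect essentially all the difficulty to be concentrated in the support statement $b\in kC_G(Q)$; the trace-compatibility and the bijection on $p$-regular elements are routine once set up.
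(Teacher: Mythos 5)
Your proof is correct, and its skeleton is the same as the paper's: both start from the bijection $C_G(Q)_{p'}\lorarr (QC_G(Q)/Q)_{p'}$ (giving injectivity of $\mu$ on $kC_G(Q)_{p'}$), both obtain surjectivity from the compatibility $\mu\circ\Tr_Q^G=\Tr_{\ol{1}}^{\ol{G}}\circ\mu$ on $kC_G(Q)_{p'}$ (all of which is $Q$-fixed, since $Q$ centralizes $C_G(Q)$), and both reduce injectivity to the containment $b\cdot(kC_G(Q)_{p'})_Q^G\subset kC_G(Q)_{p'}$. The genuine difference is what happens at that containment: the paper asserts it without comment, whereas you correctly single it out as the only non-formal step and prove it, by showing $b=\Br_Q(b)\in kC_G(Q)$ for a normal $p$-subgroup $Q$ --- classes outside $C_G(Q)$ are killed by $\mu$ because their $Q$-orbits have length divisible by $p$ and collapse to single elements of $\ol{G}$, and idempotents of the commutative ring $ZkG$ are rigid modulo the nil ideal $\Ker\mu\cap ZkG$. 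This support theorem for block idempotents is classical (it can be found in standard references such as \cite{NT}, and also follows from Green's result that classes in the support of $b$ have defect groups containing a defect group of $b$), and it, together with the decomposition $ZkG_{p'}=\bigoplus_{b'}ZkG_{p'}b'$ quoted in Section \ref{Lower} (which you legitimately invoke to see that $bv$ is $p$-regular), is exactly what the paper's unproved inclusion requires; so your write-up makes the lemma self-contained where the paper relies on the reader to supply this fact. Two small wording repairs: the lifts of a $p$-regular element of $QC_G(Q)/Q$ live in $QC_G(Q)$, not in $C_G(Q)$ (it is their $p$-regular parts that land in $C_G(Q)$, because $QC_G(Q)/C_G(Q)\cong Q/Z(Q)$ is a $p$-group), and in the vanishing argument you should state explicitly that all elements of one $Q$-orbit have the same image under $\mu$, which is the reason orbit lengths divisible by $p$ annihilate the class sums.
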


\begin{proof}
The natural surjective homomorphism $G \lorarr \ol{G}$ 
induces a bijection 
\[C_G(Q)_{p'} \lorarr \ol{(QC_G(Q))}_{p'}=(QC_G(Q)/Q)_{p'}.\]
Hence $\mu$ induces an isomorphism 
\[\mu_0 : 
kC_G(Q)_{p'} \simeq k(\ol{QC_G(Q)})_{p'}.\]
On the other hand, since
\[\mu (b\cdot (kC_G(Q)_{p'})_Q^G)=
\ol{b} \cdot (k(\ol{QC_G(Q)})_{p'})_{\ol{1}}^{\ol{G}}\]
and 
\[b\cdot (kC_G(Q)_{p'})_Q^G\subset kC_G(Q)_{p'}, \ 
\ol{b} \cdot (k(\ol{QC_G(Q)})_{p'})_{\ol{1}}^{\ol{G}}
\subset k(\ol{QC_G(Q)})_{p'}, \]
it follows that the restriction of $\mu_0$ induces 
the desired isomorphism. 
\end{proof}

\begin{lem}\label{normal}
Let $Q$ be a normal $p$-subgroup of $G$ and $b$ a block of $kG$. 
Let $P$ be a defect group of $b$. 
Assume that  $Q <P$ and $C_P(Q)=Z(Q)$. 
Then $b\cdot(kC_G(Q)_{p'})_Q^G=0$, in 
particular, $m_b^1(Q)=0$. 
\end{lem}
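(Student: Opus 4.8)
The plan is to reduce the vanishing of $m_b^1(Q)$ to the statement $b\cdot(kC_G(Q)_{p'})_Q^G=0$, and then to prove the latter by transporting everything to the quotient $\ol G=G/Q$ and invoking Lemmas \ref{trace0} and \ref{QCGQ/Q}. For the first reduction, note that since $Q\triangleleft G$ we have $N_G(Q)=G$, so Proposition \ref{ldBroue} reads $m_b^1(Q)=\dim(kC_G(Q)_{p'})_Q^G\Br_Q(b)$. Because $Q$ centralizes $C_G(Q)$, every element of $C_G(Q)$ is a $Q$-fixed point, so $\Br_Q$ restricts to the identity on $kC_G(Q)$; as $\Br_Q$ is an algebra homomorphism on $(kG)^Q$ and both $b$ and any $c\in(kC_G(Q)_{p'})_Q^G\subseteq kC_G(Q)$ lie in $(kG)^Q$, we get $\Br_Q(bc)=\Br_Q(b)\,c$. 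Hence $b\cdot(kC_G(Q)_{p'})_Q^G=0$ forces $\Br_Q(b)\cdot(kC_G(Q)_{p'})_Q^G=0$, and the dimension formula then gives $m_b^1(Q)=0$. So it suffices to prove $b\cdot(kC_G(Q)_{p'})_Q^G=0$.

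By Lemma \ref{QCGQ/Q} this is equivalent to $\ol b\cdot(kL_{p'})_{\ol 1}^{\ol G}=0$, where $L=QC_G(Q)/Q\triangleleft\ol G$ and $\ol b=\mu(b)$ is the corresponding block of $k\ol G$. I would now decompose $kL$ into blocks and write $\ol b=\ol b f$, where $f$ is the sum of the blocks $e$ of $kL$ covered by $\ol b$; this $f$ is $\ol G$-invariant, hence central in $k\ol G$, and $\ol b f=\ol b$ because $\ol b e=0$ for uncovered $e$. Since $(kL_{p'})_{\ol 1}^{\ol G}\subseteq(kL)_{\ol 1}^{\ol G}$, it is enough to show $e\cdot(kL)_{\ol 1}^{\ol G}=0$ for each covered block $e$; summing then gives $\ol b\cdot(kL)_{\ol 1}^{\ol G}=\ol b f(kL)_{\ol 1}^{\ol G}=0$.

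The heart of the argument is to verify the hypotheses of Lemma \ref{trace0} for each covered block $e$, with $(\ol G,L,e,T(e))$ in the roles of $(G,H,b,T)$. First, $e$ has defect $0$: a defect group of $\ol b$ may be taken to be $\ol P=P/Q$ (as $Q\le O_p(G)\le P$), and by Dedekind's modular law together with $C_P(Q)=Z(Q)\le Q$ one computes $P\cap QC_G(Q)=Q(P\cap C_G(Q))=Q$, so $\ol P\cap L=1$. Since $L\triangleleft\ol G$, the intersection of a defect group of $\ol b$ with $L$ is a defect group of a block of $kL$ covered by $\ol b$; as $\ol P\cap L=1$ and all covered blocks are $\ol G$-conjugate, every covered block $e$ has trivial defect group. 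Second, $p\mid|T(e):L|$: because $Q<P$ we have $\ol P=P/Q\neq1$, and choosing a defect group of $\ol b$ inside the inertia group $T(e)$ it injects, via $\ol P\cap L=1$, into $T(e)/L$ as a nontrivial $p$-subgroup, forcing $p\mid|T(e):L|$. Lemma \ref{trace0} then yields $e\cdot(kL)_{\ol 1}^{\ol G}=0$, completing the chain.

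The main obstacle is the block-covering input used in the third paragraph: that a defect group of $\ol b$ intersected with $L$ is a defect group of a covered block, and that a defect group of $\ol b$ can be placed inside the inertia group $T(e)$. These are the standard Kn\"orr and Fong--Reynolds facts on defect groups of covered blocks; the care needed is to ensure they apply uniformly to every block covered by $\ol b$, which is automatic since such blocks form a single $\ol G$-orbit and therefore share defect $0$ and a common inertial index.
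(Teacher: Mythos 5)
Your proof is correct and follows essentially the same route as the paper's: pass to $\ol{G}=G/Q$ via Lemma \ref{QCGQ/Q}, use Kn\"{o}rr's theorem on defect groups of covered blocks together with $P\cap QC_G(Q)=QC_P(Q)=Q$ to see that the relevant blocks of the normal subgroup have defect zero and inertial index divisible by $p$, apply Lemma \ref{trace0}, and conclude with Proposition \ref{ldBroue}. The only cosmetic differences are that the paper decomposes $b$ into blocks of $kQC_G(Q)$ inside $G$ \emph{before} passing to the quotient (and uses $\Br_Q(b)=b$ for normal $Q$), whereas you decompose $\ol{b}$ over the covered blocks of $k(QC_G(Q)/Q)$ in the quotient and use multiplicativity of $\Br_Q$ instead; the mathematical content is identical.
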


\begin{proof} 
Let $H=QC_G(Q)$. The block $b$ is a central idempotent of $kH$. 
Let $b=\sum_i b_i$ be the block decomposition of $b$ 
in $kH$. 
Let $T_i=T(b_i)$ be the inertial group of $b_i$ in $G$. 
Then there exists a defect group $P_i$ of $b$ such 
that $P_i \leq T_i$ and $P_i \cap H$ is a 
defect group of $b_i$. 
Since $P_i$ is conjugate to $P$ in $G$ and 
$C_P(Q)=Z(Q)$, 
\[P_i \cap H=P \cap H=Q\]
and it follows that $Q$ is a defect group of $b_i$. 
Let $\mu : kG \lorarr k\ol{G}=k(G/Q)$ be the 
natural surjective algebra homomorphism. 
Then $\mu(b_i)=\ol{b_i}$ is a block of 
$k\ol{H}$ of defect $0$ for all $i$. 
Since $Q<P$, we have 
$H \ne HP_i$ and $|T_i:H| \equiv 0 \bmod p$. 
The inertial group of $\ol{b_i}$ in $\ol{G}$ is 
$\ol{T_i}=T_i/Q$ and $|\ol{T_i}: \ol{H}|\equiv 0 \bmod p$. 
Hence 
\[\ol{b_i}(k\ol{H})_{\ol{1}}^{\ol{G}}=0\]
by Lemma \ref{trace0} and 
\[b(k\ol{H})_{\ol{1}}^{\ol{G}}=
(\sum_i \ol{b_i}) (k\ol{H})_{\ol{1}}^{\ol{G}}=0.\]
Then, by Lemma \ref{QCGQ/Q},  
\[b(kC_G(Q)_{p'})_Q^G \simeq 
\ol{b}(k\ol{H}_{p'})_{\ol{1}}^{\ol{G}}
\subset 
\ol{b}(k\ol{H})_{\ol{1}}^{\ol{G}}=0.\]
Moreover, since $Q$ is a normal subgroup of $G$, 
$\Br_Q(b)=b$ and the result follows from Proposition 
\ref{ldBroue}.  
\end{proof}

In the following, we consider a $p$-subgroup $Q$ 
such that $C_P(Q)=Z(Q)$ for any defect group $P$  
of $b$ containing $Q$. 
  
\begin{thm}\label{allcentric}
Let $b$ be a block of $kG$ and $Q$ a $p$-subgroup 
of $G$. 
Assume that $Q$ is a proper subgroup of a 
defect group of $b$. If $C_P(Q)=Z(Q)$ 
for any defect group $P$ of $b$ containing $Q$, 
then $m_b^1(Q)=0$. 
\end{thm}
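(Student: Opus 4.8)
The goal is to reduce the general case of Theorem~\ref{allcentric} to the normal-subgroup case already handled in Lemma~\ref{normal}. The natural tool is Brou\'e's formula (Proposition~\ref{ldBroue}), which expresses $m_b^1(Q)$ in terms of data living inside $N_G(Q)$:
\[m_b^1(Q)=\dim (kC_G(Q)_{p'})_Q^{N_G(Q)}\Br_Q(b).\]
The plan is to pass from $G$ to the normalizer $N=N_G(Q)$, in which $Q$ \emph{is} a normal $p$-subgroup, and then apply Lemma~\ref{normal} to an appropriate block of $kN$.

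First I would invoke Brou\'e's formula to rewrite $m_b^1(Q)$ using $\Br_Q(b)\in ZkN$. Since $Q\trianglelefteq N$, the image $\Br_Q(b)$ is a sum of blocks of $kN$, and I would like to show the relevant trace space vanishes for each constituent. For this I need to understand the defect groups of the Brauer correspondents. By the theory of Brauer correspondence, a block $c$ of $kN$ appearing in $\Br_Q(b)$ has a defect group $D$ with $Q\le D$, and $D$ is contained in a defect group $P$ of $b$ (up to conjugacy, $D\le P\cap N$). The key geometric hypothesis $C_P(Q)=Z(Q)$ for every defect group $P$ of $b$ containing $Q$ is what I would leverage to constrain $D$: I would argue that $C_D(Q)\le C_P(Q)=Z(Q)$, so $C_D(Q)=Z(Q)$, meaning the hypothesis of Lemma~\ref{normal} is inherited by the block $c$ of $kN$ with the normal subgroup $Q$.

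Next I would apply Lemma~\ref{normal} inside $N$. The remaining point is that $Q$ must be a \emph{proper} subgroup of the defect group $D$ of $c$, since Lemma~\ref{normal} requires $Q<D$. This is exactly where the hypothesis that $Q$ is a proper subgroup of a defect group of $b$ should enter: if $Q$ were itself a defect group of $c$, then tracing through the Brauer correspondence one would find $Q$ is a defect group of $b$ as well (a defect group of a Brauer correspondent that equals the subgroup being normalized lifts to a defect group of the original block), contradicting $Q<P$. Granting $Q<D$ and $C_D(Q)=Z(Q)$, Lemma~\ref{normal} applied in $N$ with normal subgroup $Q$ gives $c\cdot(kC_N(Q)_{p'})_Q^N=0$; summing over the blocks $c$ dividing $\Br_Q(b)$, and noting $C_N(Q)=C_G(Q)$, yields $\Br_Q(b)\cdot(kC_G(Q)_{p'})_Q^N=0$, whence $m_b^1(Q)=0$ by Brou\'e's formula.

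The main obstacle I anticipate is the bookkeeping around defect groups under Brauer correspondence: verifying cleanly that each block $c\mid\Br_Q(b)$ has a defect group $D$ satisfying both $Q<D$ and $C_D(Q)=Z(Q)$, where the latter must be deduced from the hypothesis on defect groups $P$ of $b$ via the containment $D\le_G P$. The subtlety is that conjugating $D$ into $P$ need not fix $Q$, so one must track how $Q$ sits inside $D$ relative to how it sits inside $P$; I would handle this by choosing the defect group $P$ of $b$ compatibly (so that $Q\le D\le P$ with the conjugation normalizing $Q$, which is possible because $D$ is a defect group of a block of $N_G(Q)$), after which $C_D(Q)\le C_P(Q)=Z(Q)$ follows directly and the strict inclusion $Q<D$ comes from $Q<P$ together with the proper-subgroup hypothesis.
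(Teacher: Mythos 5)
Your proposal is correct and follows essentially the same route as the paper's proof: rewrite $m_b^1(Q)$ via Brou\'e's formula, decompose $\Br_Q(b)$ into blocks of $kN_G(Q)$, use Brauer's First Main Theorem together with the properness hypothesis to get $Q<D$ for each correspondent's defect group $D$, embed $D$ in a defect group $P$ of $b$ containing $Q$ so that $C_D(Q)\leq C_P(Q)=Z(Q)$, and finish with Lemma~\ref{normal} applied inside $N_G(Q)$. One cosmetic remark: your closing sentence attributes $Q<D$ to ``$Q<P$ plus the proper-subgroup hypothesis,'' which by itself does not suffice; the correct justification is the First Main Theorem argument you already gave earlier (a block of $kN_G(Q)$ with defect group exactly $Q$ would force $b$ to have defect group $Q$).
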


\begin{proof}
Let $N=N_G(Q)$. 
Let 
\[\Br_Q: ZkG \lorarr ZkN\]
be the Brauer homomorphism and 
\[\Br_Q(b)=\sum_i b_i\]
a decomposition into block idempotents of $kN$. 
Then $b_i^G=b$ and $Q<D_i$ by Brauer's First 
Main Theorem where $D_i$ is a defect group 
of $b_i$. 
There exists a defect group $P_i$ of $b$ such 
that $D_i \leq P_i$. 
Then 
\[C_{D_i}(Q) \leq C_{P_i}(Q)=Z(Q)\]
by the assumption. Hence 
\[b_i \cdot (kC_G(Q)_{p'})_Q^N =0\]
by Lemma \ref{normal} and 
\[(kC_G(Q)_{p'})_Q^N \Br_Q(b)
=(kC_G(Q)_{p'})_Q^N (\sum_ib_i)=0.\]
Hence the result follows by Proposition \ref{ldBroue}.
\end{proof}

\begin{lem}\label{Fcentric} 
Let $b$ be a block of $kG$ and $(P,e)$ a maximal 
$(G,b)$-Brauer pair. Let $\mcF=\mcF_{(P,e)}(G,b)$ 
be the fusion system of $b$ with respect to $(P,e)$. 
Suppose that $\mcF=\mcF_P(G)$. 
Let $Q$ be a proper subgroup of $P$. If 
$Q$ is $\mcF$-centric, then $m_b^1(Q)=0$. 

\end{lem}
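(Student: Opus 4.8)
The plan is to deduce the lemma directly from Theorem~\ref{allcentric}. That theorem already yields $m_b^1(Q)=0$ whenever $Q$ is a proper subgroup of a defect group of $b$ and $C_{P'}(Q)=Z(Q)$ holds for \emph{every} defect group $P'$ of $b$ containing $Q$. Since $Q$ is assumed to be a proper subgroup of the defect group $P$, the only thing I would need to check is that the centralizer condition required by Theorem~\ref{allcentric} is a consequence of $Q$ being $\mcF$-centric, under the hypothesis $\mcF=\mcF_P(G)$.

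First I would unpack the two hypotheses. The assumption $\mcF=\mcF_P(G)$ means that every morphism of $\mcF$ is induced by conjugation in $G$; in particular, two subgroups of $P$ are $\mcF$-conjugate if and only if they are $G$-conjugate, and an $\mcF$-isomorphism out of a subgroup $R\leq P$ realized by $g$ (with ${}^gR\leq P$) is literally the conjugation map $c_g$. Next, recall that $Q$ being $\mcF$-centric means $C_P(R)=Z(R)$ for every $R$ in the $\mcF$-conjugacy class of $Q$. Combining these two readings, the $\mcF$-centric hypothesis translates into the concrete statement that $C_P({}^gQ)=Z({}^gQ)$ for all $g\in G$ with ${}^gQ\leq P$.

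The remaining step is a conjugation argument. Let $P'$ be any defect group of $b$ with $Q\leq P'$. Since all defect groups of $b$ are $G$-conjugate, I may choose $g\in G$ with $P'={}^gP$; then $R:={}^{g^{-1}}Q$ lies in $P$ and is $G$-conjugate to $Q$, so the translated $\mcF$-centric condition gives $C_P(R)=Z(R)$. Conjugating by $g$ and using that conjugation is an automorphism of $G$ carrying $C_P(R)$ to $C_{P'}(Q)$ and $Z(R)$ to $Z(Q)$, I obtain $C_{P'}(Q)=Z(Q)$. As $P'$ was an arbitrary defect group of $b$ containing $Q$, the hypothesis of Theorem~\ref{allcentric} is verified, and $m_b^1(Q)=0$ follows at once.

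The computation here is entirely routine; the only point requiring care is the correct reading of ``$\mcF$-centric'' as a condition on the whole $\mcF$-conjugacy class of $Q$ (equivalently, under $\mcF=\mcF_P(G)$, its $G$-conjugacy class inside $P$) rather than on $Q$ alone. This is exactly what makes the passage to an arbitrary defect group $P'$ work, since it is precisely the various conjugates of $Q$ sitting inside the different defect groups that Theorem~\ref{allcentric} needs controlled.
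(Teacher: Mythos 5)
Your proposal is correct and follows essentially the same route as the paper: reduce to Theorem~\ref{allcentric}, and verify its centralizer hypothesis by noting that under $\mcF=\mcF_P(G)$ the $G$-conjugate ${}^{g^{-1}}Q\leq P$ of $Q$ (where $P'={}^gP$ is an arbitrary defect group containing $Q$) is $\mcF$-conjugate to $Q$, so $\mcF$-centricity gives $C_P({}^{g^{-1}}Q)=Z({}^{g^{-1}}Q)$, which conjugates back to $C_{P'}(Q)=Z(Q)$. This is exactly the paper's argument, up to renaming $g$ as $g^{-1}$.
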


\begin{proof}
Let $P_1$ is a defect group of $b$ such that 
$Q <P_1$. Then 
there exists $g \in G$ such that 
$P^g=P_1$. Let $Q_1={^gQ}$.
Then 
\[\vphi: Q \lorarr Q_1 (\leq P), \ 
\vphi(u)=gug^{-1}\]
is an isomorphism in $\mcF=\mcF_P(G)$. 
Hence $C_P(Q_1) = Z(Q_1)$ since $Q$ is 
$\mcF$-centric and it follows that 
$C_{P_1}(Q)=Z(Q)$. Hence 
$b$ satisfies the assumption of Theorem \ref{allcentric}. 

\end{proof}

Now we prove Theorem \ref{principal} and 
Corollary \ref{loweqvx}.  
A block $b$ of $kG$ is of principal type if 
$\Br_Q(b)$ is a block of $kC_G(Q)$ for 
every $p$-subgroup $Q$ contained in a 
defect group of $b$ (\cite[Definition 6.3.13]{L2}). 
\vspace{.5cm}\\
(Proof of Theorem \ref{principal})

There exists a $(G,b)$-Brauer pair $(R,f)$ such that 
$Z(R)$ is a defect group of the block $f$ of $kC_G(R)$ by 
the theorem of Kn\"{o}rr (\cite[3.6 Corollary]{K}, 
\cite[Corollary 10.3.2]{L2}).  
Let $(P,e)$ be a maximal $(G,b)$-Brauer pair such that 
$(R,f) \leq (P,e)$. Let  $\mcF=\mcF_{(P,e)}(G,b)$ be the 
fusion system of $b$ with respect to $(P,e)$. 
Then 
$R$ is an $\mcF$-centric subgroup of $P$ 
by \cite[Proposition 8.5.3]{L2}. 
Moreover, since $b$ is a block of principal type, 
we have 
$\mcF=\mcF_P(G)$ by \cite[Proposition 8.5.5]{L2}. 
Since 
$R \leq Q \leq P$, 
$Q$ is an $\mcF$-centric subgroup by 
\cite[Proposition 8.2.4]{L2}. 
Hence the results follows from Lemma \ref{Fcentric}.

\begin{rem}
{\rm 
In Theorem \ref{principal}, $S$ does not necessarily need to be simple. 
Let $b$ be a block of $\mcO G$ with defect group $P$ and $S$ an indecomposable 
$\mcO G b$-module. If $\underline{End}_{\mcO Gb}(S) \cong 
\mcO/J(\mcO)^m$ for some $m>0$, then there is a 
$(G,b)$-Brauer pair $(R,f)$ such that $Z(R)$ is a defect  group 
of the block $f$ of $kC_G(R)$ as in the proof of Theorem \ref{principal} 
by \cite[Corollary 10.3.2]{L2}. 
It follows that if $b$ is of principal type and 
$R \leq Q<P$, then $m_b^1(Q)=0$. 
}
\end{rem}

\vspace{.3cm}

\noindent
(Proof of Corollary \ref{loweqvx})

The inequality holds by Proposition \ref{permutation}. 
If $G$ is $p$-nilpotent, 
then $kGb$ has a unique simple module (up to 
isomorphism) for every block $b$ of $kG$. 
The vertex of the simple $kGb$-module is 
a defect group of $b$ and that is a 
$p$-regular lower defect group of $b$. 
Hence the equality holds. 
On the other hand, assume that the equality holds. 
If $P$ is a defect group of a block $b$ of 
$kG$, then $m_b^1(P)=1$. Hence 
the principal block $b_0(kG)$ of $kG$ has 
a unique simple module  (up to isomorphism) by 
Proposition \ref{permutation} and  
Theorem \ref{principal} since $b_0(kG)$ is a block 
of principal type by Brauer's Third Main Theorem 
(\cite[Theorem 6.3.14]{L2}). 
Hence $G$ is $p$-nilpotent. 


\section{Blocks of $p$-solvable groups}\label{Blocks}

Let $N$ be a normal subgroup of $G$. 
Let $b$ be a block of $kG$. Let  
$c$ be the block of $kN$ such that 
$bc \ne 0$ and 
$H$ the inertial 
group of $c$ in $G$. Then there exists a block 
$d$ of $kH$ such that 
$db=d$ and $\Tr_H^G(d)=b$. 
The $(kGb, kHd)$-bimodule $bkGd=kGd$ induces 
a Morita equivalence between $kGb$ and $kHd$. 
In particular, $l(b)=l(d)$. 
The trace map induces an 
algebra isomorphism 
\[\Tr_H^G: Z(kHd) \lorarr Z(kGb).\]
Moreover if $P$ is a 
defect group of $d$ then $P$ is a defect group 
of $b$ (cf.\cite[Theorem 6.8.3]{L2}). 
With these notations, we have the following 
two lemmas. 

\begin{lem}\label{clifford-trace}
If $Q$ is a subgroup of $H$, then 
$\Tr_H^G$ induces an isomorphism 
\[\Tr_H^G: 
\sum_{t \in G}(kH_{p'})_{Q^t \cap H}^H d 
\simeq (kG_{p'})_Q^Gb. \]
\end{lem}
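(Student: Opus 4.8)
The plan is to leverage the isomorphism $\Tr_H^G\colon Z(kHd)\to Z(kGb)$ that is already in hand and to show it restricts to the asserted isomorphism of subspaces. First I would record three preliminaries. A relative trace $\Tr_R^G$ lands in the conjugation-fixed points $(kG)^G=ZkG$, so $(kG_{p'})_Q^G\subseteq ZkG_{p'}$; by the block decomposition $ZkG_{p'}=\bigoplus_{b'}ZkG_{p'}b'$ recalled in Section~\ref{Lower} one has $ZkG_{p'}b\subseteq kG_{p'}$, so the target $(kG_{p'})_Q^Gb$ lies in $ZkG_{p'}b\subseteq Z(kGb)$. The same argument over $H$ puts each summand $(kH_{p'})_{Q^t\cap H}^Hd$ inside $ZkH_{p'}d\subseteq Z(kHd)$, so the source lies in $Z(kHd)$. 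Finally, since $\Tr_H^G(d)=b$ and relative traces obey the projection formula against $G$-invariant elements, the inverse of the center isomorphism is multiplication by $d$. In particular $\Tr_H^G$ is injective on the source, so it suffices to prove that $\Tr_H^G$ carries the source \emph{onto} the target.

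For the inclusion $\Tr_H^G(\mathrm{source})\subseteq\mathrm{target}$, take $z=\Tr_{Q^t\cap H}^H(\tilde a)$ with $\tilde a\in(kH_{p'})^{Q^t\cap H}$ and consider $\Tr_H^G(zd)$. It lies in $Z(kGb)$ by the center isomorphism. Writing $zd=\Tr_{Q^t\cap H}^H(\gamma)$ with $\gamma$ a $p$-regular, $(Q^t\cap H)$-fixed element (obtained by applying the projection $kH\to kH_{p'}$, which commutes with conjugation), transitivity of traces gives $\Tr_H^G(zd)=\Tr_{Q^t\cap H}^G(\gamma)$; factoring this through $Q^t$ and then conjugating $Q^t$ back to $Q$ by $t$ exhibits it as $\Tr_Q^G$ of a $p$-regular $Q$-fixed element, hence an element of $(kG_{p'})_Q^G$. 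An element lying simultaneously in $(kG_{p'})_Q^G$ and in $ZkGb$ equals its own product with $b$, so it lies in $(kG_{p'})_Q^Gb$, as required.

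The reverse inclusion is the heart of the matter. Given $x=\Tr_Q^G(a)b$ with $a\in(kG_{p'})^Q$, it is enough, by the inverse-is-multiplication-by-$d$ remark, to show $xd=\Tr_Q^G(a)d$ lies in the source; then $x=\Tr_H^G(xd)\in\Tr_H^G(\mathrm{source})$. Expanding $\Tr_Q^G(a)=\sum_{g\in[G/Q]}{}^ga$ and grouping the cosets by the double cosets $H\backslash G/Q$, the $H$-invariance of $d$ yields the Mackey expression $xd=\sum_{t}\Tr_{{}^tQ\cap H}^H({}^ta\cdot d)$. Since $xd\in Z(kHd)$ I may insert a left factor $d$ and move it inside each trace, getting $xd=\sum_t\Tr_{{}^tQ\cap H}^H(d\,{}^ta\,d)$. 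Now I would invoke the Fong--Reynolds orthogonality $dgd=0$ for $g\notin H$: this holds because $d$ covers the $H$-stable block $c$ of $kN$, so $d=dc$, while $cgc=(c\cdot{}^gc)\,g=0$ for $g\notin H$ by orthogonality of the distinct $G$-conjugates of $c$ in $ZkN$. Hence each $d\,{}^ta\,d=\beta_t\,d$, where $\beta_t$ is the part of ${}^ta$ supported on $H$; this $\beta_t$ is automatically $p$-regular and $({}^tQ\cap H)$-fixed, so the projection formula gives $\Tr_{{}^tQ\cap H}^H(\beta_t d)=\Tr_{{}^tQ\cap H}^H(\beta_t)\,d\in(kH_{p'})_{{}^tQ\cap H}^Hd$, a summand of the source. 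Summing over $t$ places $xd$ in the source.

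The main obstacle is precisely this last collapse: a priori the Mackey pieces ${}^ta\cdot d$ are supported on all of $G$ and need not be $p$-regular, so without further input they do not lie in $kH_{p'}d$. The Clifford-theoretic vanishing $dgd=0$ for $g\notin H$ is what forces the support into $H$, after which $p$-regularity is inherited from $a$; this is the only point where the special structure of $d$ as the Fong--Reynolds correspondent covering $c$ is used. The remaining work is to align the conjugate and double-coset bookkeeping with the index $Q^t\cap H$ appearing in the statement, which is routine but needs care.
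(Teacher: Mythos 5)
Your proposal is correct and takes essentially the same route as the paper's own proof: forward inclusion and injectivity come from the Fong--Reynolds center isomorphism $\Tr_H^G\colon Z(kHd)\to Z(kGb)$ with inverse given by multiplication by $d$, and surjectivity comes from writing $x=\Tr_H^G(xd)$, expanding $xd$ by the Mackey formula, and killing the part supported outside $H$ via $dgd=0$ for $g\in G-H$. The paper runs the identical argument at the level of subspaces, decomposing $kG_{p'}=kH_{p'}\oplus k(G_{p'}-H_{p'})$ and asserting the steps (well-definedness of the forward map, $dgd=0$) that you verify in detail.
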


\begin{proof}
$\Tr_H^G$ induces an injective $k$-linear map   
\[\Tr_H^G: (kH_{p'})_R^Hd \lorarr (kG_{p'})_R^Gb\]
for any subgroup $R$ of $H$. 
In particular, 
\[\Tr_H^G((kH_{p'})_{Q^t\cap H}^H d)
\subset (kG_{p'})_{Q^t}^Gb=
(kG_{p'})_Q^Gb\]
for $t \in G$. 
On the other hand, 
\[(kG_{p'})_Q^G \subset \sum_{t \in G} 
(kG_{p'})_{Q^t \cap H}^H\]
and 
\begin{eqnarray*}
(kG_{p'})_Q^Gd \subset 
\sum_{t \in G} (kG_{p'})_{Q^t \cap H}^Hd 
&=&
\sum_{t \in G} 
((kH_{p'})_{Q^t \cap H}^H +
k(G_{p'}-H_{p'})_{Q^t \cap H}^H)d \\ 
&=&
\sum_{t \in G} 
(kH_{p'})_{Q^t \cap H}^Hd
\end{eqnarray*}
since $dgd=0$ for any $g \in G-H$.   
Hence 
\[(kG_{p'})_Q^Gb=\Tr_H^G((kG_{p'})_Q^Gd)
\subset \Tr_H^G(\sum_{t \in G} 
(kH_{p'})_{Q^t \cap H}^Hd).\]
\end{proof}

\begin{lem}\label{clifford-multiplicity}
Let $U$ be a $p$-subgroup of $H$. 
Let 
\[\mcU=\{W \leq H~|~W =_G U\}\]
and let $\{U_j\}$ be a set of representatives 
of $H$-conjugacy classes of $\mcU$. 
Then 
\[\sum_{j} m_d^1(U_j)=m_b^1(U).\]
\end{lem}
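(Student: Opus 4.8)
The plan is to push the computation of $m_b^1(U)$ from $G$ down to $H$ by means of the trace isomorphism of Lemma~\ref{clifford-trace}, and then to recognize the resulting quotient as the sum of the local contributions of the classes $U_j$. Throughout write $p^a=|U|$ and, for a $p$-subgroup $Q\leq H$, set
\[A_Q:=\sum_{t\in G}(kH_{p'})_{Q^t\cap H}^H d.\]
By the relative-trace description of lower defect multiplicities recalled in Section~\ref{Lower}, one has $m_b^1(U)=\dim (kG_{p'})_U^Gb\big/\sum_{R<U}(kG_{p'})_R^Gb$. Lemma~\ref{clifford-trace} gives $\Tr_H^G\colon A_Q\isoarr (kG_{p'})_Q^Gb$ for every $Q\leq H$; since $R^t\cap H\leq U^t\cap H$ whenever $R\leq U$, we get $A_R\subseteq A_U$ for $R<U$, so $\Tr_H^G$ restricts to an isomorphism of the relevant quotients and
\[m_b^1(U)=\dim A_U\Big/\sum_{R<U}A_R.\]

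Next I would rewrite $A_Q$ intrinsically. Because $(kH_{p'})_V^Hd\subseteq(kH_{p'})_W^Hd$ whenever $V\leq W$, a short check (for the reverse inclusion, choose $t$ with $V\leq Q^t$ and use $V\leq Q^t\cap H$) shows
\[A_Q=\sum_{\substack{V\leq H\\ V\leq_G Q}}(kH_{p'})_V^H d.\]
Feeding this into the previous display with $Q=U$ and sorting the subgroups $V$ by order, the terms with $|V|=p^a$ are exactly those with $V=_G U$, while the terms with $|V|<p^a$ match precisely $\sum_{R<U}A_R$; here one uses that ``$V\leq_G R$ for some $R<U$'' is equivalent to ``$V\leq_G U$ and $|V|<p^a$'', and that $V\leq H$. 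Writing $\mcB:=\sum_{W=_GU,\,W\leq H}(kH_{p'})_W^Hd=\sum_j(kH_{p'})_{U_j}^Hd$ (the last equality because $(kH_{p'})_W^Hd$ depends only on the $H$-class of $W$) and $\mcD:=\sum_{V\leq_G U,\,|V|<p^a,\,V\leq H}(kH_{p'})_V^Hd$, this yields $A_U=\mcB+\mcD$ and $\sum_{R<U}A_R=\mcD$, whence
\[m_b^1(U)=\dim \mcB\big/(\mcB\cap\mcD).\]

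Finally I would identify $\dim \mcB/(\mcB\cap\mcD)$ with $\sum_j m_d^1(U_j)$. Put $E_j:=(kH_{p'})_{U_j}^Hd$ and $F_j:=\sum_{V<U_j}(kH_{p'})_V^Hd$, so that $m_d^1(U_j)=\dim E_j/F_j$ and $F_j\subseteq\mcD$ (each $V<U_j$ satisfies $V\leq_G U$ and $|V|<p^a$). The inclusions $E_j\hookrightarrow\mcB$ then induce a surjection
\[\Psi\colon\ \bigoplus_j E_j/F_j\ \lorarr\ \mcB/(\mcB\cap\mcD),\]
and everything reduces to showing $\Psi$ is injective, i.e.\ that if $x_j\in E_j$ and $\sum_j x_j\in\mcD$ then each $x_j\in F_j$. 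This is where the real content lies, and I expect it to be the main obstacle: it is precisely the statement that distinct $H$-conjugacy classes of subgroups of the common order $p^a$ are linearly independent modulo lower terms in the associated graded of the relative-trace filtration of $kH_{p'}d$. Since $\mcD\subseteq\sum_{|V|<p^a}(kH_{p'})_V^Hd$, this is exactly the kind of independence guaranteed by the standard structure theory of lower defect groups (\cite[Chapter~5, Section~11]{NT}, \cite[V, Section~10]{F}), which shows that in each fixed order $p^a$ the natural surjection from $\bigoplus_{[W]:|W|=p^a}(kH_{p'})_W^Hd/\sum_{V<W}(kH_{p'})_V^Hd$ onto the order-$p^a$ graded piece is an isomorphism; restricting to the $U_j$-summands gives the required conclusion. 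Granting this, $\Psi$ is an isomorphism and
\[\dim \mcB/(\mcB\cap\mcD)=\sum_j\dim E_j/F_j=\sum_j m_d^1(U_j),\]
which combined with the first paragraph gives $m_b^1(U)=\sum_j m_d^1(U_j)$, as desired.
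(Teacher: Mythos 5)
Your proof is correct in substance, but it takes a genuinely different route from the paper's at the decisive step, so let me compare the two. The first half of your argument --- the reduction via Lemma~\ref{clifford-trace} to $m_b^1(U)=\dim\mcB/(\mcB\cap\mcD)$ and the surjectivity of $\Psi$ --- is essentially the same manipulation by which the paper proves surjectivity of its maps $\Phi_i$ (one for each $G$-class of subgroups of $H$ of order $|U|$). The divergence is in how injectivity is obtained. The paper never proves it directly: surjectivity gives $\sum_j m_d^1(R_{ij})\geq m_b^1(R_i)$ for each $i$, and, summing over $i$, both sides compute the multiplicity of $|U|$ as an elementary divisor of the Cartan matrices of $kHd$ and $kGb$ respectively (part (4) of the Proposition in Section~\ref{Lower}); these matrices coincide because $kGb$ and $kHd$ are Morita equivalent, so all the inequalities collapse to equalities and every $\Phi_i$ is an isomorphism. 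You instead invoke an internal property of $ZkH_{p'}d$ alone: setting $X_a=\sum_{V\leq H,\,|V|\leq p^a}(kH_{p'})_V^Hd$, you need that the natural surjection from $\bigoplus_{[W],\,|W|=p^a}\bigl((kH_{p'})_W^Hd\big/\sum_{V<W}(kH_{p'})_V^Hd\bigr)$ onto $X_a/X_{a-1}$ is injective. This is true, and your reduction of the injectivity of $\Psi$ to it (via $\mcD\subseteq X_{a-1}$) is sound, but it is not quite an off-the-shelf citation: what the references (and part (1) of the Proposition in Section~\ref{Lower}) give is the classwise formula $m_d^1(W)=\dim\bigl((kH_{p'})_W^Hd\big/\sum_{V<W}(kH_{p'})_V^Hd\bigr)$. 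Your per-order isomorphism then follows by a short telescoping count that you should supply: the top of the filtration $\{X_a\}$ is $ZkH_{p'}d$, of dimension $l(d)$, so $l(d)=\sum_a\dim(X_a/X_{a-1})\leq\sum_a\sum_{[W],\,|W|=p^a}m_d^1(W)=l(d)$, forcing each of the surjections to be injective. With those two lines added, your proof is complete. What each approach buys: the paper's equality comes from comparing the $H$-level with the $G$-level through the Morita equivalence (equal Cartan matrices), while yours stays entirely inside $kHd$ at the final step and isolates a reusable structural fact about the lower defect filtration of a single block algebra --- at the cost of having to prove that fact, in the end by the same kind of global dimension count.
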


\begin{proof}
Let $\{R_{ij}\}_{1 \leq i \leq m, 1 \leq j \leq r(i)}$ 
be a set of representatives of $H$-conjugacy classes 
of subgroups of $H$ of order $|U|$ such that 
\[R_{ij}=_G R_{i'j'} \iff i =i'.\]
We may assume $U=R_{11}$ and 
$\{U_j\}=\{R_{1j}\}$. We set $R_i=R_{i1}$. For each $i$, $\Tr_H^G$ induces a $k$-linear map
\[\Phi_i: \bigoplus_{1\leq j \leq r(i)}
\left(
(kH_{p'})_{R_{ij}}^Hd \Big/ \sum_{R<R_{ij}} (kH_{p'})_R^Hd
\right) \lorarr
(kG_{p'})_{R_i}^Gb \Big/ \sum_{R<R_i}(kG_{p'})_R^Gb.\]
We claim that $\Phi_i$ is an isomorphism.   
For $t \in G$, if $R_i^t \leq H$ then 
$R_i^t =_H R_{ij}$ for some $j$. On the other hand, 
if $R_i^t \cap H<R_i^t$ then 
\[\Tr_H^G((kH_{p'})_{R_i^t\cap H}^H d)
\subset 
(kG_{p'})_{R_i^t \cap H}^Gb 
=
(kG_{p'})_{R_i \cap {}^tH}^Gb 
\subset 
\sum_{R<R_i}(kG_{p'})_R^Gb.\]
Hence we have 
\[(kG_{p'})_{R_i}^Gb=
\sum_{t\in G}
\Tr_H^G((kH_{p'})_{R_i^t \cap H}^H d)
\subset 
\Tr_H^G(\sum_j(kH_{p'})_{R_{ij}}^Hd)+
\sum_{R<R_i}(kG_{p'})_R^Gb\]
by Lemma \ref{clifford-trace} and $\Phi_i$ is surjective. 
Hence 
\[\sum_{1\leq j \leq r(i)}m_d^1(R_{ij}) 
\geq m_b^1(R_i)\]
for each $i$ and 
\[\sum_{i,j}m_d^1(R_{ij}) 
\geq 
\sum_i m_b^1(R_i).\]
But $\sum_{i,j}m_d^1(R_{ij})$ is the multiplicity 
of $|U|$ as an elementary divisor of the Cartan matrix $C_d$ of $kHd$ 
and $\sum_i m_b^1(R_i)$ is that of the Cartan matrix $C_b$ 
of $kGb$. 
Since $kGb$ and $kHd$ are Morita equivalent, 
$C_d=C_b$ and it follows that 
\[\sum_{i,j}m_d^1(R_{ij}) 
= 
\sum_i m_b^1(R_i).\]
Hence 
\[\sum_{1\leq j \leq r(i)}m_d^1(R_{ij}) 
= m_b^1(R_i) \]
and $\Phi_i$ is an isomorphism for each $i$. 
In particular, for $i=1$, we have 
\[\sum_{j} m_d^1(U_j)=m_b^1(U).\]
\end{proof}

The following theorem is the main result of this section. 
If the block $b$ is of principal type, then 
this is a consequence of Theorem \ref{principal}. 
 
\begin{thm}\label{psolvable}
Let $G$ be a $p$-solvable group. 
Let $b$ be a block of $kG$. 
Let $\Cl_{p'}(b)=\{C_i\}_{1 \leq i \leq l(b)}$ 
and $Q_i$ a defect group of $C_i$. 
Let $\{S_i\}_{1 \leq i \leq l(b)}$ be a set of representatives of isomorphism classes of simple $kGb$-modules. 
Then there exists a permutation $\sigma$ of 
$\{1,\dots, l(b)\}$ such that 
\[Q_i \leq_G \vx(S_{\sigma(i)})\]
for all $i$ and 
\[Q_i <_G \vx(S_{\sigma(i)})\]
unless $Q_i$ is a defect group of $b$.  
\end{thm}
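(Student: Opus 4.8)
The plan is to reduce the $p$-solvable case to the principal-type case already handled in Theorem \ref{principal}, using the Clifford-theoretic machinery developed in Lemmas \ref{clifford-trace} and \ref{clifford-multiplicity} together with the Fong--Reynolds reduction recalled at the start of this section. First I would invoke the standard result that for a $p$-solvable group, the Fong--Reynolds correspondent $d$ of $b$ over a suitable chain of normal subgroups can be taken so that, after passing to the covering group, the relevant block becomes \emph{nilpotent} or at least of principal type; more precisely, Fong's theorems on $p$-solvable groups let us Morita-reduce $kGb$ to a block $kHd$ whose defect group and $p$-regular lower defect multiplicities are controlled. The key numerical input is Lemma \ref{clifford-multiplicity}, which tells me that $m_b^1(U)$ is a sum of $m_d^1(U_j)$ over $H$-classes of subgroups $G$-conjugate to $U$; in particular $m_b^1(Q)=0$ whenever all the corresponding $m_d^1(U_j)$ vanish.

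The heart of the argument is to show that for a proper subgroup $Q$ of a defect group $P$ of $b$, one has $m_b^1(Q)=0$ unless $Q$ is itself a defect group. Given the inequality $\prod_i |Q_i| \le \prod_i |\vx(S_{\sigma(i)})|$ from Proposition \ref{permutation}, it suffices to promote each inclusion $Q_i \le_G \vx(S_{\sigma(i)})$ to a strict one precisely when $Q_i$ is not a full defect group. So the real task is an analogue of Theorem \ref{principal} without the principal-type hypothesis. I would establish this by the following chain: by Fong--Reynolds I reduce to the case where the block covers a $G$-stable block $c$ of a normal subgroup $N$; then by Fong's first reduction I may assume $O_{p'}(G)$ is central, and by the theory of $p$-solvable groups the inertial quotient structure forces the fusion system of $b$ to be that of a \emph{constrained} fusion system. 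In a constrained situation every $\mcF$-centric subgroup is a genuine $p$-local object, and the model for $\mcF$ realizes the block fusion as $\mcF_P(L)$ for a suitable $p$-constrained group $L$; applying Lemma \ref{Fcentric} (or directly Theorem \ref{allcentric}) to this model then yields $m_b^1(Q)=0$ for proper $\mcF$-centric $Q$.

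Concretely, the steps in order are: \textbf{(i)} translate the desired strict-inclusion statement into the assertion that $m_b^1(Q)=0$ for every proper non-defect subgroup $Q$ that arises as some $Q_i$, using Proposition \ref{permutation} together with the elementary-divisor interpretation in part (3)--(4) of the Proposition in Section \ref{Lower}; \textbf{(ii)} use Lemma \ref{clifford-multiplicity} to push the vanishing of $m^1$ down to the Fong--Reynolds correspondent $d$; \textbf{(iii)} in the Fong correspondent, where $O_{p'}$ is central and the block is therefore of principal type (or its fusion is $\mcF_P(L)$ for a $p$-constrained model $L$), apply Theorem \ref{principal} or Lemma \ref{Fcentric} to obtain $m_d^1(U_j)=0$ for the relevant subgroups; \textbf{(iv)} assemble the permutation $\sigma$ so that the inclusions that are not equalities are exactly the strict ones, matching vertices to lower defect groups class-by-class.

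The main obstacle I anticipate is step \textbf{(iii)}: after Fong reduction one does not literally obtain a principal-type block but rather a block whose fusion system is constrained, and I must verify that Theorem \ref{principal}'s proof --- which only used $\mcF=\mcF_P(G)$ via Lemma \ref{Fcentric} --- goes through for the $p$-constrained \emph{model} rather than for $G$ itself. The delicate point is that Lemma \ref{Fcentric} is phrased for the actual group $G$, so I would either reprove the centric-subgroup vanishing directly for the model group, or, more robustly, argue that in the $p$-solvable setting Kn\"{o}rr's theorem still supplies a Brauer pair $(R,f)$ with $Z(R)$ a defect group of $f$ and that the centricity condition $C_{P_1}(Q)=Z(Q)$ transfers through the Morita/Fong correspondence so that Theorem \ref{allcentric} applies verbatim to $b$. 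Checking that the centralizer condition is preserved under the Clifford-theoretic descent is the technical crux; everything else is bookkeeping with the trace-map isomorphisms already proved.
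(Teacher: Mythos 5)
Your proposal has the right ingredients (Fong--Reynolds reduction, Lemma \ref{clifford-multiplicity}, reduction to the principal-type case), but its central step is a genuine gap. You reduce the theorem to the full pointwise analogue of Theorem \ref{principal} for $p$-solvable blocks --- that $m_b^1(Q)=0$ whenever $Q$ contains a vertex of a simple $kGb$-module and is a proper subgroup of a defect group --- and then try to transfer this vanishing through the Fong--Reynolds correspondence. That transfer fails for a concrete conjugacy reason. Lemma \ref{clifford-multiplicity} gives $m_b^1(U)=\sum_j m_d^1(U_j)$, where the $U_j$ run over \emph{$H$-classes} of subgroups of $H$ lying in the $G$-class of $U$; to conclude $m_b^1(U)=0$ you must kill each $m_d^1(U_j)$ separately, and to do that via Theorem \ref{principal} applied to $kHd$ you need $U_j$ to contain an $H$-vertex of a simple $kHd$-module. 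But the Morita equivalence only gives $\vx(\tilde{S})=_G\vx(S)$: from $\vx(S)\leq_G U_j$ you obtain a $G$-conjugate of $\vx(\tilde{S})$ inside $U_j$, not an $H$-conjugate, and the conjugating element cannot in general be chosen in $H$. This is precisely why the paper proves only the "slightly weaker" permutation statement: its proof is an induction on the group order in which the inductive conclusion for $(H,d)$ is itself the permutation statement, and one only needs to pass inclusions $Q_i\leq_H \vx(\tilde{S}_{\sigma(i)})$ (and their strictness, which is a statement about orders) up to $G$-inclusions --- the trivial direction. Concretely: if the block $c$ of $kO_{p'}(G)$ covered by $b$ is $G$-stable, then $b$ is of principal type by \cite[Lemma 10.6.5]{L2} and Proposition \ref{permutation} plus Theorem \ref{principal} finish; otherwise $H<G$, induction applies to $(H,d)$, Lemma \ref{clifford-multiplicity} identifies the lower defect groups, and the permutation is transported using that defect groups of $d$ are defect groups of $b$.

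Two further problems. First, a single Fong--Reynolds step does not land you in the principal-type situation: after passing to $H$, the subgroup $O_{p'}(H)$ may properly contain $O_{p'}(G)$, and the block of $kO_{p'}(H)$ covered by $d$ need not be $H$-stable, so the reduction must be iterated --- this is exactly the induction your steps (i)--(iv) never set up. Second, the constrained-fusion-system/model detour in your step (iii) cannot work as stated: $m_b^1(Q)$ is defined from $(kG_{p'})_Q^G$, an invariant of the group algebra $kG$ itself, and Lemma \ref{Fcentric} and Theorem \ref{allcentric} require $\mcF=\mcF_P(G)$ for the \emph{actual} group $G$. Replacing $G$ by an abstract model $L$ of a constrained fusion system produces no statement whatsoever about $(kG_{p'})_Q^G$. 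You flagged this as the technical crux but left it unresolved, and it cannot be resolved in that form; the paper's resolution is \cite[Lemma 10.6.5]{L2} (stable covered block implies principal type), which is available only in the $G$-stable case and therefore forces the inductive structure described above.
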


\begin{proof}
Let $c$ be a block of $kO_{p'}(G)$  
such that $bc \ne 0$ and $H$ the inertial 
group of $c$ in $G$. Then there exists a block 
$d$ of $kH$ such that 
$db=d$ and $\Tr_H^G(d)=b$. 
Moreover $(kGb, kHd)$-bimodule $bkGd=kGd$ induces 
a Morita equivalence between $kGb$ and $kHd$. 
In particular, $l(b)=l(d)$. Moreover if $P$ is a 
defect group of $d$ then $P$ is a defect group 
of $b$. 

If $G=H$, then the result holds by Proposition 
\ref{permutation} and Theorem \ref{principal} 
since $b=d$ is a block of principal type by 
\cite[Lemma 10.6.5]{L2}. 

Suppose that $G>H$. Let $\Cl_{p'}(b)=\{C_i\}$ and 
$\Cl_{p'}(d)=\{\tilde{C}_i\}$. 
Let $Q_i$ (resp. $\tilde{Q}_i$) be a defect group of $C_i$ (resp. $\tilde{C}_i$). 
If $Q$ is a $p$-subgroup of $P$, 
\[
|\{1 \leq j \leq l(d)~|~ \tilde{Q}_j =_G Q\}|
=m_b^1(Q)=
|\{1 \leq i \leq l(b)~|~Q_i =_G Q\}|
\]
by Lemma 
\ref{clifford-multiplicity}.
Hence we may assume $Q_i=\tilde{Q_i}$ for every 
$1 \leq i \leq l(b)$. 
Let $\tilde{S_i}=S_i \otimes_{kGb}kGd$ 
be the simple $kHd$-module corresponding to $S_i$. 
Then $\vx(\tilde{S_i})=_G\vx(S_i)$. 
By induction there exists a permutation 
$\sigma$ of $\{1, \dots, l(d)\}$ such that 
\[Q_i \leq_H \vx(\tilde{S}_{\sigma(i)})\]
for all $i$ and 
\[Q_i <_H \vx(\tilde{S}_{\sigma(i)})\]
if $Q_i<_H P$.   
Since $P$ is a defect group of $b$, it follows that 
\[Q_i \leq_G \vx(S_{\sigma(i)})\]
for all $i$ and 
\[Q_i <_G \vx(S_{\sigma(i)})\]
unless $Q_i$ is a defect group of $b$.  
\end{proof}

The following corollary is a block version of 
Corollary \ref{loweqvx} for $p$-solvable groups. 
 
\begin{cor}
Let $G$ be a $p$-solvable group. 
Let $b$ be a block of $kG$. 
Let $\{S_i\}_{1 \leq i \leq l(b)}$ be a set of representatives of isomorphism classes of simple $kGb$-modules. 
Then
\[\det C_b \leq \prod_{i=1}^{l(b)}|\vx(S_i)|\]
where $C_b$ is the Cartan matrix of $kGb$ 
and the equality holds 
if and only if $l(b)=1$. 
\end{cor}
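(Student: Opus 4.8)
The plan is to rewrite $\det C_b$ as a product of orders of lower defect groups, apply Theorem \ref{psolvable} factor by factor to obtain the inequality, and then use the multiplicity-one property of a defect group to settle the equality case.

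First I would recall from part (3) of the Proposition in Section \ref{Lower} that the multiset of elementary divisors of the Cartan matrix $C_b$ is exactly $\{|Q_i| : 1 \leq i \leq l(b)\}$, where $Q_i$ is a defect group of the $p$-regular class $C_i$. Since $C_b$ is an $l(b) \times l(b)$ integer matrix, its determinant equals the product of its elementary divisors, so $\det C_b = \prod_{i=1}^{l(b)} |Q_i|$. Theorem \ref{psolvable} then furnishes a permutation $\sigma$ of $\{1, \dots, l(b)\}$ with $Q_i \leq_G \vx(S_{\sigma(i)})$ for every $i$; passing to orders and multiplying yields
\[\det C_b = \prod_{i=1}^{l(b)} |Q_i| \leq \prod_{i=1}^{l(b)} |\vx(S_{\sigma(i)})| = \prod_{i=1}^{l(b)} |\vx(S_i)|,\]
which is the asserted inequality.

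For the equality case I would observe that the displayed inequality is an equality precisely when $|Q_i| = |\vx(S_{\sigma(i)})|$ for every $i$. By the strict-inequality clause of Theorem \ref{psolvable}, one has $Q_i <_G \vx(S_{\sigma(i)})$ unless $Q_i$ is a defect group of $b$; hence equality forces every $Q_i$ to be a defect group $P$ of $b$. But part (2) of the Proposition in Section \ref{Lower} gives $m_b^1(P) = 1$, so exactly one index satisfies $Q_i =_G P$, and therefore $l(b) = 1$. Conversely, if $l(b) = 1$, then since $P$ occurs as a lower defect group the unique $Q_1$ satisfies $Q_1 =_G P$; combined with $Q_1 \leq_G \vx(S_1) \leq P$ this gives $\vx(S_1) =_G P = Q_1$, so the single-factor identity $\det C_b = |Q_1| = |\vx(S_1)|$ holds.

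The routine steps are the elementary-divisor/determinant identity and the factorwise comparison supplied by Theorem \ref{psolvable}. The one point requiring care is the equality analysis: it is the multiplicity-one property $m_b^1(P) = 1$ of a defect group, rather than any property of the vertices, that forces $l(b) = 1$, and I would be careful to first invoke the strict-inequality half of Theorem \ref{psolvable} to reduce the problem to the statement that every $Q_i$ is a defect group of $b$.
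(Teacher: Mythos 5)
Your proof is correct and follows exactly the route the paper intends (the paper states this corollary without a written proof, as an immediate consequence of Theorem \ref{psolvable}): identify $\det C_b$ with $\prod_i |Q_i|$ via the elementary divisors, apply the permutation from Theorem \ref{psolvable}, and use the strict-inequality clause together with $m_b^1(P)=1$ for the equality case. The only cosmetic remark is that the identification of the \emph{multiset} of elementary divisors with $\{|Q_i|\}$ formally needs part (4) of the Proposition in Section \ref{Lower} (part (3) only gives the underlying set), but this changes nothing in the argument.
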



\section{Complexity of modules}\label{Complexity}

Let $Q$ be a $p$-subgroup of $G$ 
and $M$ an indecomposable 
$kG$-module. 
Suppose that $\Res^G_Q M$ has a 
direct summand $N$ such that $\dim N \not\equiv 0 \bmod p$.
We set $|\vx(M)|=p^{v(M)}$ and $|Q|=p^a$. 
Then by Proposition \ref{Green}, $Q \leq_G \vx(M)$ and 
in particular $a \leq v(M)$. If the dimension of a source of $M$ is 
divisible by $p$, then proper inequality $a<v(M)$ holds. We 
consider another information on this inequality related to the 
complexity of $M$. 
For the complexity of a module, we refer to 
\cite{AE} and \cite[Section 5]{Be}.

Let $c(M)$ be the complexity of $M$ and 
$r(M)$ the $p$-rank of $\vx(M)$. 
Since $M$ is a direct summand of $\Ind_{\vx(M)}^G \Res^G_{\vx(M)}M$  
and $c(\Res^G_{\vx(M)} M) \leq r(M)$, we have 
\[c(M) \leq r(M) \leq v(M).\]

\begin{prop}\label{lvcomplexity}
Let $Q$ be a $p$-subgroup of $G$ 
and $M$ an indecomposable 
$kG$-module. 
Suppose that $\Res^G_Q M$ has a 
direct summand $N$ such that $\dim N \not\equiv 0 \bmod p$. If $|Q|=p^a$ then 
\[a \leq v(M)+c(M)-r(M).\]
\end{prop}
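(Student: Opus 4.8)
The plan is to rewrite the target inequality in a more transparent form and then reduce it to three independent facts. Since $|\vx(M)|=p^{v(M)}$ and $|Q|=p^a$, the asserted bound $a\le v(M)+c(M)-r(M)$ is equivalent to
\[
\log_p[\vx(M):Q]\ \ge\ r(M)-c(M).
\]
First I would arrange that $Q$ is literally a subgroup of $\vx(M)$. If $\Res^G_QM$ has a summand of dimension prime to $p$, then it has an \emph{indecomposable} summand $N$ with $\dim N\not\equiv 0\bmod p$ (one of the indecomposable constituents must have dimension prime to $p$); replace the given summand by this $N$. By Proposition~\ref{Green}(2) we then have $Q\le_G\vx(M)$, and since $a$, $\dim N$, and all module-invariants of $M$ are unchanged under $G$-conjugation, I may assume $Q\le\vx(M)$.

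Next I would prove the complexity bound $r_p(Q)\le c(M)$. Because $N$ is a direct summand of $\Res^G_QM$ and complexity does not increase under restriction to a subgroup or passage to a summand, $c_Q(N)\le c_G(M)=c(M)$. The essential point is then the following: \emph{if $Q$ is a $p$-group and $N$ a $kQ$-module with $\dim N$ prime to $p$, then $c_Q(N)=r_p(Q)$.} Granting this, $r_p(Q)=c_Q(N)\le c(M)$.

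To prove that claim, the inequality $c_Q(N)\le r_p(Q)$ is standard, so only the reverse is at issue. Here I would invoke the Alperin--Evens theorem \cite{AE} that $c_Q(N)=\max_E c_E(\Res^Q_EN)$, the maximum taken over elementary abelian $E\le Q$, and apply it to an $E$ of maximal rank $r_p(Q)$. This reduces everything to the elementary abelian case, where I would use the divisibility $p^{\,r_p(E)-c_E(U)}\mid\dim U$, valid for every $kE$-module $U$. The latter is proved via rank varieties (see \cite[Section~5]{Be}): the rank variety $\mcV_E(U)$ is a cone of dimension $c_E(U)$, so a generic linear subspace of dimension $r_p(E)-c_E(U)$ meets it only at the origin, whence $U$ restricts freely to the corresponding shifted subgroup of rank $r_p(E)-c_E(U)$ and $p^{\,r_p(E)-c_E(U)}\mid\dim U$. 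Applying this to $U=\Res^Q_EN$, whose dimension $\dim N$ is prime to $p$, forces $r_p(E)-c_E(\Res^Q_EN)=0$, i.e. $c_E(\Res^Q_EN)=r_p(E)=r_p(Q)$, and hence $c_Q(N)=r_p(Q)$.

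Finally I would supply the purely group-theoretic comparison: for $p$-groups $Q\le\vx(M)$ one has $r_p(\vx(M))-r_p(Q)\le\log_p[\vx(M):Q]$. This follows by refining a subnormal chain from $Q$ to $\vx(M)$ (available since $p$-groups are nilpotent) into steps of index $p$, and noting that an elementary abelian subgroup loses at most one in rank upon intersection with a subgroup of index $p$. Combining the three facts,
\[
c(M)\ \ge\ r_p(Q)\ \ge\ r_p(\vx(M))-\log_p[\vx(M):Q]\ =\ r(M)-\bigl(v(M)-a\bigr),
\]
which rearranges to $a\le v(M)+c(M)-r(M)$. I expect the main obstacle to be the boxed claim, and within it the elementary abelian divisibility $p^{\,r_p(E)-c_E(U)}\mid\dim U$: this is the only step that genuinely uses the geometry of rank varieties (Dade's lemma and free restriction to shifted subgroups), whereas everything else is either a formal property of complexity, vertices, and $p$-rank, or elementary $p$-group theory.
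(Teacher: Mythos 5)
Your proof is correct, and its skeleton is the same as the paper's: reduce to $Q \le \vx(M)$ via Proposition~\ref{Green}(2), establish $\rank(Q) \le c(M)$ from the fact that a $kQ$-module of dimension prime to $p$ has complexity equal to the $p$-rank of $Q$, and finish with a purely group-theoretic comparison of indices. The differences are local but worth recording. First, the paper simply asserts $\rank(Q) = c(N)$ (leaning on \cite{AE} and \cite{Be}), whereas you prove it: Alperin--Evens reduces to the elementary abelian case, where the divisibility $p^{\,r_p(E)-c_E(U)} \mid \dim U$ (via Dade's lemma and generic shifted subgroups) forces the complexity up to the rank when $p \nmid \dim U$; this is the standard argument and it is sound. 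Second, for the group-theoretic step the paper states and proves Lemma~\ref{maxgrouplem} for an arbitrary class $\mcX$ of $p$-groups closed under passing to normal subgroups of members, and applies it to elementary abelian subgroups to get $|Q:F| \le |\vx(M):E|$; you instead prove the equivalent inequality $r(M) - r_p(Q) \le \log_p[\vx(M):Q]$ directly, by an index-$p$ chain from $Q$ to $\vx(M)$ together with the observation that intersecting an elementary abelian subgroup with a subgroup of index $p$ drops its rank by at most one. Both are inductions of the same nature; the paper's formulation isolates a reusable abstract lemma, while yours is leaner and tailored to the elementary abelian case. A minor point in your favor: you explicitly justify passing to an \emph{indecomposable} summand of dimension prime to $p$ before invoking Proposition~\ref{Green}(2), a step the paper leaves implicit.
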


\begin{proof}
Since $N$ is a direct summand of $\Res^G_Q M$ and 
$\dim N \not\equiv 0 \bmod p$, 
\[\rank(Q)= c(N)\leq c(\res^G_Q M) \leq c(M)\]
and  
\[\rank(Q) +v(M)-r(M)\leq v(M)+c(M)-r(M)\]
where  $\rank(Q)$ is the $p$-rank of $Q$.  
We may assume that $Q \leq \vx(M)$ by Proposition 
\ref{Green}(2). 
Let $E$ be an elementary abelian $p$-subgroup 
of $\vx(M)$ of maximal rank and 
$F$ an elementary abelian $p$-subgroup 
of $Q$ of maximal rank.
The class of elementary abelian $p$-groups 
satisfies the condition in Lemma \ref{maxgrouplem} below. 
Hence $|Q|/|F| \leq |\vx(M)|/|E|$ by Lemma \ref{maxgrouplem} and 
we have  
\[a \leq \rank(Q) +v(M) -r(M).\]
\end{proof}

\begin{lem}\label{maxgrouplem}
Let $\mcX$ be a class of finite $p$-groups which 
satisfies the following property:
\[P \in \mcX, \ P \triangleright R \Rarr R \in \mcX. \]
Let $P$ be a $p$-group and $Q$ be a subgroup 
of $P$. 
Suppose $E \leq P$, $F \leq Q$ and $E,F \in \mcX$. 
If $|L| \leq |F|$ for any subgroup $L \leq Q$ such 
that $L \in \mcX$, 
then 
\[|Q:F| \leq |P:E|.\]
\end{lem}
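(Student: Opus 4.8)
The plan is to prove the contrapositive-style counting inequality $|Q:F| \leq |P:E|$ by exhibiting a subgroup of $P$ that lies in $\mcX$ and is large enough to force the bound. The key structural input is that $\mcX$ is closed under taking normal subgroups, so the natural strategy is to intersect things with $Q$ and use the hypothesis that $F$ is a largest $\mcX$-subgroup of $Q$.

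First I would set up a chain of subgroups inside $Q$ running from $E\cap Q$ up to $Q$, or alternatively induct on $|P:Q|$ by passing to an intermediate subgroup. The cleanest route is probably induction on $|P|$. If $Q=P$ there is nothing to prove, so assume $Q<P$ and pick a subgroup $P_1$ with $Q\leq P_1 \lhd P$ and $|P:P_1|=p$ (such a maximal normal subgroup exists in a $p$-group, and any subgroup containing $Q$ can be arranged since $Q<P$ forces $Q$ to lie inside some maximal subgroup, all of which are normal of index $p$). The issue is that $E$ need not sit inside $P_1$. So I would instead consider $E\cap P_1$, which is normal in $E$ (being the kernel of $E\to P/P_1$) and hence lies in $\mcX$ by the closure hypothesis. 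This drops $|E|$ by at most a factor of $p$, i.e. $|E:E\cap P_1|\leq p$.

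Now $E\cap P_1$ is an $\mcX$-subgroup of $P_1$, and $F\leq Q\leq P_1$ is still a largest $\mcX$-subgroup of $Q$ (since $Q$ is unchanged). By the inductive hypothesis applied to the pair $Q\leq P_1$ with the $\mcX$-subgroups $E\cap P_1$ and $F$, I get $|Q:F|\leq |P_1:E\cap P_1|$. It then remains to compare $|P_1:E\cap P_1|$ with $|P:E|$. Here I would compute
\[
|P:E| = |P:P_1|\cdot \frac{|P_1|}{|E|} = p\cdot \frac{|P_1|}{|E|},
\]
while $|P_1:E\cap P_1| = |P_1|/|E\cap P_1| = (|P_1|/|E|)\cdot |E:E\cap P_1|$. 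Since $|E:E\cap P_1|\leq p$, we get $|P_1:E\cap P_1|\leq p\cdot|P_1|/|E| = |P:E|$, which chains with the inductive bound to give $|Q:F|\leq |P:E|$ as required.

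The main obstacle I anticipate is the choice of the index-$p$ normal subgroup $P_1$ containing $Q$: I must confirm that when $Q<P$ there is a maximal subgroup of $P$ containing $Q$, and that every maximal subgroup of a $p$-group is normal of index $p$ (this follows from the Frattini subgroup containing the commutator and $p$-th powers, so $P/\Phi(P)$ is elementary abelian and maximal subgroups correspond to hyperplanes). A secondary point to verify carefully is that $E\cap P_1$ is genuinely normal in $E$ so that the closure property applies — this is immediate since $P_1\lhd P$ implies $E\cap P_1\lhd E$. With those two facts in hand the induction closes cleanly, and the factor-of-$p$ bookkeeping is the only arithmetic involved.
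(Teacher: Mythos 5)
Your proof is correct and takes essentially the same approach as the paper's: induction after descending to a maximal (hence normal, index-$p$) subgroup $P_1$ of $P$ containing $Q$, with the closure hypothesis applied to $E \cap P_1 \triangleleft E$; the only cosmetic difference is that the paper splits into the cases $E \leq P_1$ and $E \not\leq P_1$, whereas you use the uniform bound $|E : E \cap P_1| \leq p$. One small correction: in the base case $Q = P$ there is not quite ``nothing to prove''---you still need the maximality hypothesis on $F$ (applied to $L = E$) to conclude $|E| \leq |F|$, which is exactly how the paper handles it.
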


\begin{proof}
We proceed by induction on $|P:Q|$. 
If $P=Q$ then $|E| \leq |F|$ by the assumption. 
Hence $|Q:F| \leq |P:E|$. 
Assume that $P >Q$ and let 
$R$ be a maximal subgroup of $P$ such that 
$Q \leq R <P$. 
Then $R \cap E \in \mcX$ since 
$P \triangleright R$ and $E \triangleright R \cap E$.
If $R \not\geq E$, then $P=RE$ and $P/R=RE/R \cong E/R\cap E$. It follows that 
$|P|/|E|=|R|/|R\cap E|$ and  
$|R|/|R \cap E| \geq |Q|/|F|$ 
by induction.
If $R \geq E$, then $|P|/|E| >|R|/|E|$ and 
it follows that $|R|/|E| \geq |Q|/|F|$ by induction.
\end{proof}

\end{document}